\newtheorem{thm}{Theorem}[section]
\newtheorem{theorem}{Theorem}
\newtheorem{cor}[thm]{Corollary}
\newtheorem{lm}[thm]{Lemma}
\newtheorem{pr}[thm]{Proposition}
\theoremstyle{definition}
\newtheorem{df}[thm]{Definition}
\theoremstyle{remark}
\newtheorem{rem}[thm]{Remark}
\newcommand{\cftf}{$C(4)$--$T(4)$}
\newcommand{\cftfs}{$C(4)$--$T(4)$ }
\newcommand{\cs}{$C(6)$}
\newcommand{\css}{$C(6)$ }
\newcommand{\ctts}{$C(3)$--$T(6)$}
\newcommand{\cttss}{$C(3)$--$T(6)$ }
\newcommand{\aot}{\cs, \cftf, and \ctts}
\newcommand{\aots}{\cs, \cftf, and \cttss}
\begin{document}
	
	\title[The small cancellation flat torus theorem]{The small cancellation flat torus theorem}

	\author[Karol Duda]{Karol Duda$^{\dag}$}
	\address{Silesian University of Technology,
	Gliwice, Poland}
	\email{karol.duda@polsl.pl}

	\begin{abstract}
	We establish Flat Torus Theorem–type results for groups acting on small cancellation complexes satisfying \aots conditions.
For \cttss complexes the result closely parallels the CAT(0) setting.
For \css complexes we prove an analogous theorem using a refined notion of flat, exploiting the relationship between \css complexes and their duals.
In the \cftfs case we demonstrate that genuine flats do not necessarily exist, providing an explicit example of a \cftfs complex with an action of $\mathbb{Z}^2$ without invariant flat, and hence not admitting any CAT(0) metric invariant under automorpihsms. We introduce the notion of thickened-flats and prove a Flat Torus Theorem for thickened-flats by passing to quadric complexes via quadrization and invoking the Quadric Flat Torus Theorem of Hoda–Munro.
	\end{abstract}
	
	\maketitle

	\section{Introduction}
	The small cancellation theory is a classical powerful tool for constructing examples of groups with interesting features, as well as for exploring properties of well-known groups. It might be seen as a bridge between the combinatorial and the geometric group theories, and as one of the first appearances of nonpositive curvature techniques in studying groups. Constituting a classical part of mathematics, small cancellation techniques are still being developed, having numbers of variations, and have also been used in proving new important results nowadays. We direct the reader to the classical book of Lyndon-Schupp~\cite{ls} for basics on small cancellation.
	
	Among the many notions of small cancellation, the best known and the most deeply explored ones are the so-called ``combinatorial small cancellation conditions'': \aot .
	
	A recurring pattern in the study of groups acting on nonpositively curved spaces is a Flat Torus Theorem, which in the classical setting of Riemannian manifolds of nonpositive sectional curvature, says that any $\mathbb{Z}^n$ in the fundamental group stabilizes a \emph{flat}, i.e. an isometrically embedded copy of Euclidean $n$-space, in the universal cover.  This classical theorem has been generalized to CAT(0) spaces \cite{BH}. 
	
\begin{theorem}[Flat torus theorem]\label{prop:FPT}
Let $G$ be a free-abelian group of rank~$n$ acting metrically properly and semisimply on a CAT(0) space $\widetilde X$.
There exists a subspace $V\times F \subset \widetilde X$ with $F$ isometric to $\mathbb{E}^n$
such that  $G$ stabilizes $V\times F$ and acts as: \  $g(v,f)=(v,gf)$ for all $(v,f)\in V\times F$ and $g \in G$.
\end{theorem}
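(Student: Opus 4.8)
The plan is to run the classical induction on the rank $n$, using throughout the structure theory of semisimple isometries of CAT(0) spaces recalled in \cite{BH}. First I would record the consequence of properness that every nontrivial $g\in G$ is hyperbolic: a nontrivial element with a fixed point would force the whole infinite cyclic group $\langle g\rangle$ to fix that point, and an infinite point-stabilizer contradicts metric properness; combined with semisimplicity this leaves only the hyperbolic case, so each $g\neq 1$ has positive translation length and nonempty minset. The base case $n=1$ is then immediate: for $G=\langle a\rangle$ the minset splits as $\mathrm{Min}(a)\cong Y\times\mathbb{R}$ with $a$ acting trivially on $Y$ and by the translation $t\mapsto t+|a|$ on the line, so one takes $V=Y$ and $F=\mathbb{R}\cong\mathbb{E}^1$.

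For the inductive step, assuming the statement for rank $n-1$, I would choose $a\in G$ to be a \emph{primitive} element, i.e.\ part of a $\mathbb{Z}$-basis, so that $G/\langle a\rangle\cong\mathbb{Z}^{n-1}$ is again free abelian. Splitting $\mathrm{Min}(a)\cong Y\times\mathbb{R}$ as above, every element of $G$ commutes with $a$, hence preserves $\mathrm{Min}(a)$ together with its product decomposition, and acts diagonally as $g\cdot(y,t)=(\alpha(g)y,\,t+\beta(g))$ for a semisimple isometry $\alpha(g)$ of $Y$ and a real number $\beta(g)$; this is exactly the centralizer part of the structure theory in \cite{BH}. The kernel of $\alpha\colon G\to\mathrm{Isom}(Y)$ consists of the elements acting as pure vertical translations, and if such an element had $\beta(g)=0$ it would fix $\mathrm{Min}(a)$ pointwise, so $g=1$ by the hyperbolicity observation; properness then forces this kernel to inject discretely into $\mathbb{R}$ via $\beta$ and hence to be infinite cyclic, while primitivity of $a$ identifies it with $\langle a\rangle$. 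Thus $\alpha$ induces a faithful action of $\bar G:=G/\langle a\rangle\cong\mathbb{Z}^{n-1}$ on $Y$ by semisimple isometries.

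It remains to check that this induced action is metrically proper, after which induction applies to produce a $\bar G$-invariant $V\times F'\subset Y$ with $F'\cong\mathbb{E}^{n-1}$, on which $\bar G$ acts trivially on $V$ and by translations on $F'$. I expect properness to be the main obstacle, since an element of $\bar G$ could a priori almost fix a bounded set in $Y$ while translating the $\mathbb{R}$-factor arbitrarily far; the resolution is to exploit that $\langle a\rangle$ acts cocompactly on the $\mathbb{R}$-factor, so that in each $\langle a\rangle$-coset one may choose a representative $g$ with $\beta(g)$ confined to $[0,|a|)$, and then the hypothesis that $G$ is proper on $Y\times\mathbb{R}$ applied to a box $B\times[0,|a|]$ bounds the number of cosets that move $B$ off itself. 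Finally I would reassemble inside $\mathrm{Min}(a)=Y\times\mathbb{R}\subset\widetilde X$ by taking $V\times(F'\times\mathbb{R})$ and setting $F:=F'\times\mathbb{R}\cong\mathbb{E}^{n-1}\times\mathbb{E}^1\cong\mathbb{E}^n$: the element $a$ acts trivially on $Y\supset V$ and translates the new $\mathbb{R}$, while every other element translates $F'$ and $\mathbb{R}$, so $G$ preserves $V\times F$, acts trivially on $V$, and acts on the Euclidean factor $F$ as a group of translations, which is precisely the asserted action $g(v,f)=(v,gf)$.
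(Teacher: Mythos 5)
This statement is quoted in the paper as a known result with a citation to \cite{BH}; the paper supplies no proof of its own, so the only meaningful comparison is with the standard argument of Bridson--Haefliger (Theorem II.7.1 there). Your sketch is correct and follows the same circle of ideas, but organizes the induction differently: Bridson--Haefliger fix a basis, apply the inductive hypothesis to the corank-one subgroup $A'=\langle\alpha_1,\dots,\alpha_{n-1}\rangle$ to get $\mathrm{Min}(A')=Y'\times\mathbb{E}^{n-1}$, and then analyse how the last generator $\alpha_n=(\beta,\gamma)$ acts on that splitting; you instead peel off a primitive element $a$, split $\mathrm{Min}(a)=Y\times\mathbb{R}$, and push the induction down to the quotient $G/\langle a\rangle$ acting on the cross-section $Y$. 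Your route requires the extra bookkeeping you correctly identify as the crux: showing that $\ker\alpha=\langle a\rangle$ (discreteness of $\beta$ on the kernel via properness, plus primitivity) and that the induced $G/\langle a\rangle$-action on $Y$ is again metrically proper and semisimple; the coset-representative argument with $\beta(g)\in[0,|a|)$ handles properness correctly, and semisimplicity of $\alpha(g)$ follows from the product criterion in \cite{BH} that you invoke implicitly. The Bridson--Haefliger route avoids the quotient entirely (no kernel or properness-of-the-induced-action step) at the cost of an argument that the $\mathbb{E}^{n-1}$-component $\gamma$ of $\alpha_n$ is a translation and that the $Y'$-component $\beta$ cannot be elliptic (which again uses properness). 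Both are standard; your write-up would be complete once the implicit appeals to the splitting theorem for $\mathrm{Min}$ of a hyperbolic isometry and to the semisimplicity-of-factors proposition are pinned to their statements in \cite{BH}.
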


Further combinatorial analogues have been established for CAT(0) cube complexes \cite{wisewoodhouse}, systolic complexes \cite{Elsner_flattorus}, and other settings. Most recently Nima Hoda and Zachary Munro proved analogue for quadric complexes \cite{flatquadric}.

In this paper we prove versions of Flat Torus Theorem for \aot small cancellation complexes. There are substantial differences among these versions.

The \cttss case is the closest to the CAT(0) setting and allows us to obtain isometrically embedded copy of Euclidean $2$-space.
An action of a group $G$ on a metric space $X$ is \emph{metrically proper} if $\{g\in G:gB\cap B\neq \emptyset\}$ is finite for any metric ball.

\begin{theorem}[Flat Torus for \cttss complexes]\label{flattorus36}
Let $G$ be a non-cyclic free abelian group. Let $G$ act metrically properly and semisimply on a simply-connected \cttss small cancellation complex $X$. Then $G\cong \mathbb{Z}^2$ and there is a $G$-invariant flat in $X$.
\end{theorem}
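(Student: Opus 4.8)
The plan is to regard the \cttss case as a combinatorial form of nonpositively curved triangle geometry and to reduce it to the flat torus theorem for systolic complexes of Elsner \cite{Elsner_flattorus}; the relation $\tfrac{1}{3}+\tfrac{1}{6}=\tfrac{1}{2}$ is exactly what puts us in the Euclidean, flat-producing regime rather than the hyperbolic one. Before building any flat I would fix the rank. Since $G$ is free abelian it is torsion free, and since the action is metrically proper no nontrivial element is elliptic; being semisimple, each nontrivial $g\in G$ thus acts as a hyperbolic isometry with an invariant combinatorial axis. Because $X$ is two-dimensional, a flat it contains is at most two-dimensional, so the flat-producing mechanism cannot accommodate a proper semisimple action of $\mathbb{Z}^{n}$ with $n\ge 3$ (this is the analogue, via the structure of flats in systolic complexes \cite{Elsner_flattorus}, of the fact that a CAT(0) $2$-complex contains no $\mathbb{E}^{3}$). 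As $G$ is non-cyclic this forces $G\cong\mathbb{Z}^{2}$.

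The next step is to make the systolic structure available. The $T(6)$ part of the \ctts condition says precisely that every interior vertex link has girth at least $6$, while $C(3)$ makes the faces triangular after subdividing along pieces; together these are the defining $6$-large condition, so I would show that the canonical triangulation $X^{\triangle}$ of $X$ along its pieces is a systolic complex. I would then verify that this construction is $G$-equivariant and that the hypotheses descend, so that $G\cong\mathbb{Z}^{2}$ still acts metrically properly and semisimply on $X^{\triangle}$, each of the two commuting generators acting as a hyperbolic isometry with a combinatorial axis.

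With $X^{\triangle}$ systolic the engine is Elsner's theorem \cite{Elsner_flattorus}: the commuting generators have convex minimal displacement sets, a flat-strip phenomenon pairs their parallel axes, and assembling the strips produces a $G$-invariant full convex subcomplex $F\subset X^{\triangle}$ isometric to the equilaterally triangulated plane, on which $G$ acts cocompactly by translations. In the piecewise-Euclidean metric this $F$ is isometric to $\mathbb{E}^{2}$, and transporting it back through the systolization yields a $G$-invariant subcomplex of $X$ isometric to $\mathbb{E}^{2}$: the desired flat, with $G$ acting as in Theorem \ref{prop:FPT}.

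I expect the decisive difficulty to lie in the systolization step --- checking, $G$-equivariantly and in a metric-preserving way, that the \ctts data really does yield a systolic complex, in particular handling faces bounded by more than three pieces and the possible failure of local finiteness, and confirming that metric properness and semisimplicity survive the passage to $X^{\triangle}$. A secondary subtlety, were one to argue directly instead of quoting Elsner, is the flat-strip step: one must show that two parallel combinatorial axes bound a genuine flat strip rather than merely a quasi-flat. It is exactly this rigidity, present under \ctts but failing under \cftf, that is responsible for obtaining an honest Euclidean plane here rather than only the quasi-flats of the \cftfs case.
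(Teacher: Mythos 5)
Your route is genuinely different from the paper's: the paper does not systolize at all, but instead invokes Theorem~\ref{thm:tF} to put a CAT(0) metric on the barycentric subdivision $\mathfrak{X}$ of $X$ (Euclidean triangles with angles $\pi/2$, $\pi/3$, $\pi/6$), applies the classical CAT(0) Flat Torus Theorem~\ref{prop:FPT} there, and then reads off from the link-length-$2\pi$ condition that the resulting flat $F$ is the equilateral triangulation sitting inside $X$ (in particular, that every $2$-cell of $X$ meeting $F$ is itself a triangle). Your proposal instead tries to reduce to Elsner's systolic theorem, and the step you yourself flag as ``the decisive difficulty'' is in fact a genuine gap rather than a technicality. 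First, $C(3)$ does not make faces triangular: a $2$-cell may be bounded by $4$, $5$, or more pieces, and any triangulation of such a cell (coning off the barycenter of a square or pentagon, say) creates an interior vertex whose link is a $4$- or $5$-cycle with no diagonal, destroying $6$-largeness; adding diagonals instead creates new edges in the links of the original vertices whose effect on flagness and $6$-largeness you do not control. No construction of a $G$-equivariant systolic triangulation $X^{\triangle}$ is given, and none is standard (the known systolic dual in this paper is the nerve of a \css complex, not anything attached to \ctts complexes). Second, the hypotheses do not match: Theorem~\ref{thm:els} requires a proper action by \emph{simplicial automorphisms}, whereas your $G$ acts metrically properly and semisimply; this mismatch is exactly why the paper states Theorem~\ref{flattorus6} with the ``properly by automorphisms'' hypothesis and Theorem~\ref{flattorus36} with the metric one, routing the latter through CAT(0) geometry where semisimplicity is the natural input.

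Even granting a systolic $X^{\triangle}$, your final step is incomplete: Elsner's flat is an isometric embedding of the $1$-skeleton of $\mathbb{E}^2_{\triangle}$ in the graph metric of $X^{\triangle}$, while the paper's notion of a flat in a \ctts complex is an isometric embedding of $\mathbb{E}^2_{\triangle}$ into $X$ with its standard piecewise-Euclidean metric. Transporting the combinatorial flat back through the (hypothetical) triangulation would still require showing that its triangles assemble into genuine $2$-cells of $X$ and that the embedding is isometric for the metric on $X$ --- precisely the content the paper extracts from the CAT(0) link condition. Your rank argument is also looser than needed: in the paper it falls out immediately because $\dim\mathfrak{X}=2$ forces $n=2$ in Theorem~\ref{prop:FPT}. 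To repair your approach you would need to either prove the equivariant systolization claim in full (handling $n$-gonal cells for $n\ge 4$ and verifying flagness and $6$-largeness of all links), or abandon it in favour of the CAT(0) metric of Theorem~\ref{thm:tF}, which is what the paper does.
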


The case of \css complexes is already more restricted.
While there exists an embedded \emph{flat plane} whose combinatorial structure is faithfully reflected in the \css small cancellation complex, the standard metric on a complex, where each edge has length~$1$ and each $n$–gon is a regular Euclidean polygon, is not necessarily sufficient for this embedding to be isometric.  Therefore it is not a flat in the same sense as in the \cttss case. The following theorem uses a slightly modified notion of a flat to account for this difference. 

Moreover, in the proof we use the systolic dual of the complex $X$ together with the Flat Torus Theorem for systolic complexes \cite[Theorem 6.1]{Elsner_flattorus}. That theorem assumes that the group acts by simplicial automorphisms on the systolic complex, so in our setting the group acts by automorphisms on the \css complex itself.

\begin{theorem}[Flat Torus for \css complexes]\label{flattorus6}
Let $G$ be a virtually non-cyclic free abelian group. Let $G$ act metrically properly on an simply-connected \css small cancellation complex $X$. Then $G\cong \mathbb{Z}^2$ and there is a $G$-invariant flat in $X$.
\end{theorem} 

The case of \cftfs complexes is significantly more complicated, since an isometrically embedded copy of Euclidean $2$-space need not exist. In Section 7 we present an example of such a \cftfs complex and observe that it cannot have CAT(0) metric invariant under automorphisms. Nevertheless, this example contains a certain quasi-flat we call \emph{thickened-flat}, and a version of the Flat Torus Theorem related to such thickened-flats still holds for \cftfs complexes.

\begin{theorem}[Flat Torus for \cftfs complexes]\label{flattorus44}
Let $G$ be a virtually non-cyclic free abelian group. Let $G$ act metrically properly on a simply-connected \cftfs small cancellation complex $X$. Then $G\cong \mathbb{Z}^2$ and there is a $G$-invariant thickened-flat in $X$.
\end{theorem}
   
\subsection*{Structure of the paper}
In Section \ref{sc} we give a brief introduction to small cancellation theory.
Section \ref{fl} focuses on flats in \css and \cttss complexes.
In Section \ref{cttss} we prove Theorem \ref{flattorus36}.
Section \ref{css} addresses the case of \css complexes. Subsection \ref{sys} provides a short introduction to systolic complexes, while Subsection \ref{flatcss} establishes a one-to-one correspondence between flats in \css complexes and flats in their systolic duals. We conclude this subsection by proving Theorem \ref{flattorus6}.

Section \ref{cftf} treats the case of \cftfs complexes. Subsection \ref{quad} gives a brief introduction to quadric complexes, while Subsection \ref{flatcftf} presents an example of a \cftfs complex without invariant flats and introduces the notion of thickened-flats. We conclude this subsection by proving Theorem \ref{flattorus44}.

\subsection*{Acknowledgements} The author would like to thank Martin Blufstein, Nima Hoda and Motiejus Valiunas whose comments led to many corrections and improvements.

\section{Basic definitions}
The purpose of this section is to give basic definitions and terminology regarding combinatorial $2$-complexes. We follow the notation of \cite{MW}.
For fundamental notions such as \textit{CW}-\textit{complexes}, \textit{nullhomotopy} and \textit{simple connectedness}, see Hatcher's textbook on algebraic topology \cite{AH}. Starting with section 3, we consider only $2$-dimensional CW-complexes and we will refer to them as ``$2$-complexes''. Throughout this section, we also mostly consider $2$-complexes, although some definitions are stated in full generality.

A map from $X$ to $Y$ is \textit{combinatorial} if it is a continuous map whose restriction to every open cell $e$ of $X$ is a homeomorphism from $e$ to an open cell of $Y$. A complex is \textit{combinatorial} if the attaching map of each of its $n$-cells is combinatorial for a suitable subdivision of the sphere $\mathbb{S}^{n-1}$. An \textit{immersion} is a combinatorial map that is locally injective.

Unless stated otherwise, all maps and complexes are combinatorial, and all attaching maps are immersions.

A \textit{polygon} is a $2$-disc with a cell structure that consists of $n$ $0$-cells, $n$ $1$-cells and a single $2$-cell. For any $2$-cell $C$ of a $2$-complex $X$ there exists a map $R\rightarrow X$ where $R$ is a polygon and the attaching map for $C$ factors as $\mathbb{S}^1\rightarrow \partial R \rightarrow X$. Because of that, by a \textit{cell}, we will mean a map $R\rightarrow X$ where $R$ is a polygon. 

A \textit{simplicial} complex is an $n$-dimensional complex with each $i$-cell being an $i$-simplex uniquely determined by $i+1$ distinct $0$-cells. In particular $1$-skeleton of simplicial complex is a simplicial graph. 

In part of this paper devoted to the \css complexes we will often work with \textit{triangle} complexes which are $2$-dimensional simplicial complexes. In the case of triangle complexes, instead of the usual terms, $0$-cell, $1$-cell and $2$-cell, we will use vertex, edge and triangle, respectively.

A \textit{path} in $X$ is a combinatorial map $P\rightarrow X$ where $P$ is either a subdivision of the interval or a single $0$-cell. For given paths $P_1\rightarrow X$ and $P_2\rightarrow X$ such that the terminal point of $P_1$ is equal to the initial point of $P_2$, their \textit{concatenation} is the natural path $P_1P_2\rightarrow X$. 
Similarly, a \textit{cycle} is a map $C\rightarrow X$ where $C$ is a subdivision of a circle $\mathbb{S}^1$. 
It is clear that a closed path, i.e. a path such that the initial point is equal to the terminal point, is a cycle.
We will often identify paths and cycles with their images in the complex $X$.

A \textit{disc diagram} is a contractible finite $2$-complex $D$ with a specified embedding into the plane. The \textit{area} of diagram $D$ is the number of its $2$-cells. If $D$ is a disc diagram, then the diagram $D$ \textit{in} $X$ is $D$ along with a combinatorial map from $D$ to $X$ denoted by $D\rightarrow X$. 

A \textit{boundary cycle} $\partial D$ of $D$ is the attaching map of the $2$-cell that contains the point $\infty$ when we regard $\mathbb{S}^2=\mathbb{R}^2\cup \{\infty\}$.

A \textit{graph} is a $1$-dimensional CW-complex with $0$-cells called \textit{vertices} and $1$-cells called \textit{edges}. Distances between vertices in graphs are always measured by the \textit{standard graph metric} which is defined for a pair of vertices $u$ and $v$ as the number of edges in the shortest path connecting $u$ and $v$. 

A \textit{link} of a $0$-cell $v$ of a $2$-complex $X$ (denoted $X_v$) is the graph whose vertices correspond to the ends of $1$-cells of $X$ incident to $v$, and an edge joins vertices corresponding to the ends of $1$-cells $e_1, e_2$ if and only if there is a $2$-cell $F$ such that $e_1$ and $e_2$ form a corner of $F$ at $v$.

\section{Small cancellation}\label{sc}

The following definition is crucial in the small cancellation theory.
\begin{df}
Let $X$ be a combinatorial $2$-complex. A non-trivial path $P\rightarrow X$ is a \textit{piece} of $X$ if there are $2$-cells $R_1$ and $R_2$ such that $P\rightarrow X$ factors as $P\rightarrow R_1\rightarrow X$ and as $P\rightarrow R_2\rightarrow X$ but there does not exist a homeomorphism $\partial R_1\rightarrow \partial R_2$ such that
there is a commutative diagram:

$$
  \begin{tikzcd}
    P \arrow{r} \arrow{d} & \partial R_2 \arrow{d} \\
    \partial R_1 \arrow{r} \arrow[swap]{ur} & X
  \end{tikzcd}
$$
\end{df}

Intuitively, a piece of $X$ is a path which is contained in boundaries of $2$-cells of $X$ in at
least two distinct ways.

\begin{df}
Let $X$ be a $2$-complex. A disc diagram $D\rightarrow X$ is \textit{reduced} if for every piece $P\rightarrow D$ the composition $P \rightarrow D\rightarrow X$ is a piece in the image of $D$ in $X$.  
\end{df}

The following lemma is known as the Lyndon-van Kampen lemma (see e.g. \cite[Lemma 2.17]{MW}).

\begin{lm}\label{lvk}
If $X$ is a $2$-complex and $P\rightarrow X$ is a nullhomotopic closed path, then there exists a reduced disc diagram $D\rightarrow X$ such that $P\rightarrow D$ is the boundary cycle of $D$, and $P\rightarrow X$ is the composition $P\rightarrow D\rightarrow X$.
\end{lm}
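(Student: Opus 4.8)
The plan is to prove the lemma in two stages: first produce \emph{some} disc diagram $D_0\to X$ whose boundary cycle is $P\to X$, and then, among all such diagrams, select one of minimal area and show that minimality forces it to be reduced. The first stage settles existence of a filling, and the second upgrades an arbitrary filling to a reduced one.

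For the existence stage I would use directly that $P\to X$ is nullhomotopic. A nullhomotopy is a continuous map $f\colon \mathbb{D}^2\to X$ whose restriction to $\partial\mathbb{D}^2\cong\mathbb{S}^1$ is the cycle $P$. Since $X$ is a CW-complex, I would homotope $f$ rel boundary to a combinatorial (cellular, transverse to cell centers) map by the cellular approximation theorem; pulling back the cell structure of $X$ along this map endows $\mathbb{D}^2$ with the structure of a finite combinatorial $2$-complex $D_0$, which is planar and contractible by construction, together with a combinatorial map $D_0\to X$ restricting to $P$ on the boundary. Equivalently one may argue algebraically: nullhomotopy of $P$ means the word it reads in the $1$-skeleton equals, in $\pi_1(X)$, a product of conjugates of attaching words of $2$-cells, and such an expression assembles in the standard way into a disc diagram with one face per conjugate. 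Either route yields a disc diagram $D_0\to X$ with boundary cycle $P$, although $D_0$ need not be reduced.

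For the reduction stage I would consider the nonempty family of all disc diagrams $D\to X$ with boundary cycle $P$ and pick one, $D$, of minimal area. I claim $D$ is reduced. Suppose not. Then by the definition of reducedness there is a piece $P'\to D$ for which the composition $P'\to D\to X$ fails to be a piece in the image of $D$. Unwinding the definition of piece, this means $D$ has two $2$-cells $R_1,R_2$ through whose boundaries $P'$ factors, and for which there \emph{does} exist a homeomorphism $\partial R_1\to\partial R_2$ making the relevant triangle over $X$ commute. In other words $R_1$ and $R_2$ form a \emph{cancellable pair}: they meet along $P'$ and their remaining boundary arcs map to the same path in $X$. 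I would then cancel this pair by the standard zipping operation, deleting $R_1$, $R_2$ and the interior of $P'$ and identifying the two remaining boundary arcs according to the homeomorphism, producing a new combinatorial map whose boundary cycle is still $P$ but whose area is strictly smaller. This contradicts minimality, so $D$ must be reduced, which proves the lemma.

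The step I expect to be the main obstacle is verifying that the zipping operation in the reduction stage really returns a \emph{disc} diagram, that is, a planar contractible complex, rather than something singular, and that it strictly lowers the area while preserving the boundary cycle. Concretely, after identifying the two boundary arcs of the cancellable lens $R_1\cup R_2$ one must check that the induced identifications among interior edges and vertices do not destroy planarity or contractibility, and one must treat the degenerate configurations separately, namely when $R_1$ and $R_2$ share more than the arc $P'$ or when the fold closes off a sphere that may simply be discarded and its components refilled. Once these combinatorial cases are handled, termination is immediate, since area is a nonnegative integer that strictly decreases at each cancellation, so finitely many zipping steps produce a reduced diagram with the prescribed boundary cycle.
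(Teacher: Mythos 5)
The paper does not actually prove this lemma: it quotes it as the classical Lyndon--van Kampen lemma with a citation to \cite[Lemma 2.17]{MW}, and your two-stage argument (produce some filling from the nullhomotopy, then take a minimal-area diagram and show minimality forces reducedness by cancelling a cancellable pair) is precisely the standard proof behind that citation. The only caveat is that the zipping step you flag as the main obstacle is indeed where the real work lies --- naive cancellation of the pair can fail to return a planar contractible complex, and the careful surgery (identifying the boundary arcs, discarding sphere components, treating the configurations where $R_1$ and $R_2$ overlap beyond the piece) is the substantive content of the published proofs --- but you have correctly identified both the problem and the standard remedies, so the outline is sound and matches the cited source.
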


Let $\alpha$ be a nullhomotopic cycle in $X$. We say that disc diagram $D$ is a \textit{minimal area disc diagram for $\alpha$ in $X$} if $\alpha$ is a boundary cycle of $D$ and $D$ has the smallest area amongst all disc diagrams with boundary cycle $\alpha$. 

We will now define small cancellation conditions $C(p)$ and $T(q)$.

\begin{df}
Let $p\in\mathbb{N}$. We say that a $2$-complex $X$ satisfies the $C(p)$ \textit{small cancellation condition} provided that for each $2$-cell $R \rightarrow X$ its attaching map $\partial R\rightarrow X$ is not a concatenation of fewer than $p$ pieces in $X$.
\end{df}
In the following definition of $T(q)$ condition we will use the notion of \textit{valence} of a $0$-cell $v\in X$ in the complex $X$, i.e.~the number of ends of $1$-cells incident to it. We denote it by $\delta_{X}(v)$ and drop the subscript if it is clear from the context.
\begin{df}
Let $q\in\mathbb{N}$. We say that a $2$-complex $X$ satisfies the $T(q)$ \textit{small cancellation condition}  if there does not exist a reduced map $D\rightarrow X$ where $D$ is a disc diagram containing an internal $0$-cell $v$ such that $2<\delta(v)<q$.  
\end{df}

If a complex satisfies both $C(p)$ and $T(q)$ conditions, then we call it a $C(p)$--$T(q)$ complex. We will now state some important properties of small cancellation complexes.

We now list some well known properties of simply connected \cftfs and \css complexes (see e.g. \cite{hoda2019quadric}, \cite{OP18}).

\begin{lm}\label{lem:intersections}
Let  $X$ be a simply connected \cftfs or \css small cancellation complex.
Then the following hold:
  \begin{enumerate}
  \item  every $2$-cell $F$ of $X$ is embedded.
  \item the intersection of any pair of intersecting two $2$-cells $F_1,F_2$ of $X$ is connected.
  \item if $F_1,F_2,\ldots, F_n$ are pairwise intersecting $2$-cells of $X$. Then $F_1\cap F_2\cap\ldots\cap F_n$ is non-empty and connected.		
  \end{enumerate}
\end{lm}

The third property is known as Helly Property.
In the case of \cftfs complexes, the following, stronger version was obtained by Nima Hoda \cite[Proposition 3.8]{hoda2019quadric}.  

\begin{pr}[Strong Helly Property]\label{stronghellycf}
Let $F_1$, $F_2$ and $F_3$ be a pairwise intersecting $2$-cells of a simply connected \cftfs complex. Then the intersection of some pair of these $2$-cells is contained in the remaining $2$-cell, i.e.~for some permutation $\sigma$ of the indices

$$F_{\sigma(1)}\cap F_{\sigma(2)}\subset F_{\sigma(3)}.$$
\end{pr}

Note that \css complexes do not have Strong Helly property (think of regular tesselation of plane by hexagons).
On the other hand, if the intersection is a piece, a weaker version stil holds.

\begin{lm}\label{stronghellypiece}
Let $F_1$, $F_2$ and $F_3$ be a pairwise intersecting $2$-cells of a simply connected \css complex and let $F_1\cap F_2\cap F_3=p$ be a piece. Then the intersection of some pair of these $2$-cells is contained in the remaining $2$-cell, i.e.~for some permutation $\sigma$ of the indices

$$F_{\sigma(1)}\cap F_{\sigma(2)}\subset F_{\sigma(3)}.$$
\end{lm}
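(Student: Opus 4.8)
The plan is to reduce the assertion to the statement that one of the three pairwise intersections already coincides with the triple intersection $p$, and then to derive that from a short counting argument at the two endpoints of $p$. Write $a=F_1\cap F_2$, $b=F_1\cap F_3$, and $c=F_2\cap F_3$. Since distinct $2$-cells meet only in their boundaries, we have $a=\partial F_1\cap\partial F_2$, and so on; in particular $p=F_1\cap F_2\cap F_3\subseteq F_i\cap F_j$ for every pair, so each of $a,b,c$ contains the non-trivial path $p$ and is therefore a piece by Lemma~\ref{ver} applied with $n=2$. Moreover, for a permutation $\sigma$ the inclusion $F_{\sigma(1)}\cap F_{\sigma(2)}\subseteq F_{\sigma(3)}$ holds if and only if $F_{\sigma(1)}\cap F_{\sigma(2)}\subseteq F_1\cap F_2\cap F_3=p$, i.e. if and only if that pairwise intersection equals $p$. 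Hence it suffices to prove that at least one of $a,b,c$ equals $p$.

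Next I would analyse the local picture at the endpoints of $p$. Let $s$ and $t$ be the two distinct endpoints of the path $p$. Each of $a,b,c$ is a connected sub-arc, containing $p$, of the boundary cycle of the relevant $2$-cell, using that in a simply connected \css complex the $2$-cells embed and their boundary cycles are simple. The key observation is that two of these pieces sharing the boundary of a single cell and meeting only in $p$ cannot both extend strictly past the same endpoint of $p$. Indeed, $a$ and $b$ both lie on $\partial F_1$ and $a\cap b=\partial F_1\cap\partial F_2\cap\partial F_3=p$; were both to run past $t$, they would share the arc of $\partial F_1$ immediately beyond $t$, forcing $a\cap b\supsetneq p$, a contradiction. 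Thus $a$ and $b$ extend past different endpoints of $p$, and the same holds for the pair $a,c$ on $\partial F_2$ and the pair $b,c$ on $\partial F_3$.

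Finally I would run the counting argument. For $x\in\{s,t\}$ let $S_x$ be the set of pieces among $a,b,c$ that extend strictly past $x$. The three ``different endpoints'' constraints say that no two of $a,b,c$ lie together in $S_s$ or together in $S_t$, whence $|S_s|\le 1$ and $|S_t|\le 1$. On the other hand, if none of $a,b,c$ equals $p$, then each strictly contains $p$ and so extends past at least one endpoint, placing each of the three pieces in $S_s\cup S_t$; this yields $3\le|S_s|+|S_t|\le 2$, a contradiction. Therefore one of $a,b,c$ equals $p$, which gives the claimed inclusion after renaming the cells.

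I expect the main obstacle to be the rigorous justification of the local step, namely that two pieces on a common cell boundary meeting exactly in $p$ must extend past different endpoints; this requires knowing that the boundary cycles are embedded and that the pairwise intersections are genuine sub-arcs rather than merely immersed paths. I would handle this by invoking the embeddedness of $2$-cells in simply connected \css complexes together with Lemma~\ref{ver}, so that ``extending past an endpoint of $p$'' and ``the arc immediately beyond that endpoint'' are well defined.
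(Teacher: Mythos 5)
Your proof is correct and rests on the same mechanism as the paper's: assuming all three pairwise intersections strictly exceed $p$, the extensions beyond $p$ must collide, since any two of the arcs $F_i\cap F_j$ share a cell boundary and meet exactly in $p$, so they cannot both run past the same endpoint. Your pigeonhole formulation over the two endpoints of $p$ is just a more symmetric and explicit write-up of the paper's (rather terse) argument with the adjacent pieces $p_1$, $p_2$ on $\partial F_1$.
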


\begin{proof}
Assume that it is not true. 
Let $p_1,p_2$ be pieces adjacent to $p$ such that, $p_1$ is a piece between $F_1$ and $F_2$ that does not belong to $F_3$, and $p_2$ is a piece between $F_1$ and $F_3$ that does not belong to $F_2$.
Then $p\cup p_1\cup p_2$ is connected and clearly belongs to $F_1$. 
Since $p_1$ does not belong to $F_3$, $p_2$ does not belong to $F_2$ and all cells are embedded, then either $F_2\cap F_3=p\subset F_1$ or that intersection is disconnected, a contradiction.
\end{proof}

\begin{rem}
This lemma is not specific to the \css condition; it relies only on the facts that $2$-cells are embedded and that intersections of $2$-cells are connected. Both of those facts hold for simply-connected \css complexes by Lemma \ref{lem:intersections}. 
\end{rem}

	In the case of \cttss complexes, note the fact that all pieces of such complexes are short, which was first observed by Pride \cite{Pride} in the following Lemma.

\begin{lm}
If $X$ is a $T(q)$ complex for $q\geq 5$, then every piece in $X$ has length $1$.
\end{lm}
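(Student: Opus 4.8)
The plan is to argue by contradiction straight from the $T(q)$ condition. Suppose some piece $P\to X$ has length at least $2$. Then $P$ traverses at least two $1$-cells, so it contains an internal $0$-cell $v$ at which two consecutive edges $e_1,e_2$ of $P$ meet. By the definition of a piece, $P$ factors through the boundaries of two cells $R_1\to X$ and $R_2\to X$ for which no boundary homeomorphism makes the defining diagram commute. In particular $R_1$ and $R_2$ are genuinely distinct along $P$, and each of them contributes a corner at $v$ spanned by the pair $e_1,e_2$.

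Next I would assemble these two cells into a reduced disc diagram. Concretely, glue $R_1$ to $R_2$ along the common subpath $P$ to obtain a disc diagram $D\to X$ whose boundary cycle is the concatenation of the two complementary arcs $\partial R_i\setminus P$. The interior $0$-cells of $P$ --- of which there is at least one, namely $v$ --- become internal $0$-cells of $D$. Because $P$ is a genuine piece of $X$, the cells $R_1$ and $R_2$ do not form a cancellable pair, so no folding occurs and $D\to X$ is reduced in the sense of the definition above, i.e.\ every piece of $D$ maps to a piece of $X$. Equivalently, one may work in the link $X_v$: the corners of $R_1$ and $R_2$ at $v$ assemble into a short closed path around $v$, and the $T(q)$ hypothesis is exactly what forbids such a short loop in a reduced diagram.

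The concluding step is to read off the valence $\delta_D(v)$ and show that $v$ is an internal vertex with $2<\delta_D(v)<q$, which contradicts the $T(q)$ condition once $q\ge 5$; hence no piece of length $\ge 2$ can exist and every piece has length $1$. I expect the main obstacle to be precisely the local bookkeeping at $v$: one must verify simultaneously that $v$ remains genuinely internal (and is not pushed onto $\partial D$), that the glued diagram is honestly reduced, and that the resulting valence falls strictly inside the excluded window $(2,q)$. This is the point where the distinctness of $R_1$ and $R_2$ furnished by the piece definition must be converted into a quantitative statement about $\delta_D(v)$ (equivalently, into the existence of a short cycle in the link $X_v$), and it is the only place where the numerical hypothesis $q\ge 5$ is genuinely used.
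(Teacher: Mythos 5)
The paper itself gives no proof of this lemma (it is quoted from Pride), so I am comparing your attempt against the standard argument. Your proposal has a genuine gap at exactly the point you flag as "the main obstacle," and the gap is fatal as written. If you glue the two polygons $R_1$ and $R_2$ along the common subpath $P=e_1e_2$, then in the resulting disc diagram $D$ the interior vertex $v$ of $P$ is incident to precisely the two edges $e_1$ and $e_2$ and nothing else: each $R_i$ is a polygon, so $v$ meets exactly two boundary edges of $R_i$, both of which lie in $P$ and get identified in the gluing. Hence $\delta_D(v)=2$, which sits on the boundary of the excluded window $2<\delta(v)<q$ and violates nothing --- internal valence-$2$ vertices are just subdivision points and are explicitly permitted by the $T(q)$ definition. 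Equivalently, in the link $X_v$ the two corners give a closed walk of length $2$, and $T(q)$ does not forbid $2$-cycles in links. So your diagram is (plausibly) reduced but yields no contradiction, for any $q$.

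The missing idea is to \emph{double} the configuration: arrange four polygons around a central vertex $\tilde v$, mapping alternately to $R_1,R_2,R_1,R_2$, glued cyclically along edges mapping alternately to $e_1$ and $e_2$ (each of $R_1$ and $R_2$ has a corner at $v$ spanned by $e_1,e_2$ because $P\subset\partial R_1\cap\partial R_2$, so this closes up into a disc). The fact that $P$ is a piece is what guarantees adjacent cells of this diagram do not form a cancellable pair, so the diagram is reduced; one still has to check that the single edges $e_1,e_2$ are themselves pieces (not just $P$), which takes a short argument about boundary homeomorphisms. Now $\tilde v$ is internal of valence $4$, and $2<4<q$ precisely when $q\ge 5$ --- this is where the numerical hypothesis enters, and it is consistent with the paper's observation that edge subdivision (which creates length-$2$ pieces) preserves $T(q)$ only for $q<5$, and with the existence of \cftfs complexes with long pieces in Section 7. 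Without this doubling step your argument proves nothing, so the proposal cannot be repaired by "bookkeeping" alone; it needs a different diagram.
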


Using this fact it is possible to prove the following.

\begin{theorem}[Theorem 3 of \cite{cftf}
]\label{thm:tF}
	Let $X$ be a simply connected \cttss  small cancellation complex. Then there exists a metric on $X$ turning it into
	a CAT$(0)$ triangle complex $\mathfrak{X}$ such that every automorphism of $X$ induces an automorphism of $\mathfrak{X}$.
	\end{theorem}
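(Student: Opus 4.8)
The plan is to exhibit a concrete piecewise-Euclidean metric on $X$, verify that it is locally CAT$(0)$ via Gromov's link condition, and then invoke the Cartan--Hadamard theorem \cite{BH} for simply connected locally CAT$(0)$ spaces. First I would realise each $2$-cell as a flat Euclidean model. Since $X$ is a \cttss complex, the preceding lemma of Pride tells us that every piece has length~$1$, so the attaching map of an $n$-gon $R$ decomposes its boundary into $n$ genuine edges, and the $C(3)$ condition guarantees $n\geq 3$. I would therefore metrize $R$ as a regular Euclidean $n$-gon, whose interior angle is $(n-2)\pi/n\geq \pi/3$. To obtain an actual triangle complex $\mathfrak{X}$, I would then triangulate each polygon by coning from its barycentre and equip the resulting triangles with the flat metric inherited from the regular $n$-gon. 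This construction is canonical, and it concentrates all the curvature of $X$ at the original $0$-cells, since every newly added vertex is an interior point of a flat polygon and hence has total angle $2\pi$.

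The heart of the argument is the verification of the link condition at the original $0$-cells. By Gromov's criterion, $\mathfrak{X}$ is locally CAT$(0)$ precisely when the link of every vertex, taken with its angular metric, contains no closed geodesic of length shorter than $2\pi$; equivalently, the shortest essential cycle in each link has length at least $2\pi$. At an interior $0$-cell $v$ the $T(6)$ condition forces $\delta(v)\geq 6$, so $v$ is surrounded by at least six polygon corners, each subtending an angle of at least $\pi/3$; summing gives total angle at least $2\pi$. When the link is a single circle this already yields the required bound, and boundary vertices have tree-like links that carry no essential cycle, so they impose no condition.

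The main obstacle is to upgrade this angle-sum estimate to a genuine girth bound, that is, to rule out short essential cycles in the link that are shorter galleries than the full circuit around $v$. Here I would translate a closed path of angular length $<2\pi$ in the link of $v$ into a reduced disc diagram $D\to X$ via the Lyndon--van Kampen lemma: such a cycle consists of a fan of polygon corners at $v$, and since each corner contributes at least $\pi/3$, a total of less than $2\pi$ can be achieved only by fewer than six corners. The vertex $v$ is then an internal $0$-cell of $D$ of valence strictly between $2$ and $6$, contradicting the $T(6)$ condition. Carrying out this correspondence carefully is the technical crux. In particular one must control the reducedness of the diagram produced, and handle the fact that a polygon's attaching map is only an immersion, so its boundary vertices need not be distinct and the coning triangulation may require a preliminary barycentric subdivision before it becomes genuinely simplicial.

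Once local CAT$(0)$-ness is established, Cartan--Hadamard \cite{BH} promotes it to global CAT$(0)$-ness using the simple connectivity of $X$. Finally, equivariance is immediate from the canonical nature of the construction: any automorphism of $X$ permutes the $n$-gons among themselves, preserves their barycentres and cone edges, and carries each regular $n$-gon isometrically onto another, so it induces an automorphism of the triangle complex $\mathfrak{X}$, as required.
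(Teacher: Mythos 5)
The paper does not prove this statement: it is quoted verbatim as Theorem~3 of the cited reference, and the only information the paper gives about the construction is that $\mathfrak{X}$ is the barycentric subdivision of $X$ metrized so that each triangle is Euclidean with angles $\pi/2$, $\pi/3$, $\pi/6$ at the midpoint of a $1$-cell, the centre of a $2$-cell, and an original $0$-cell respectively. Your strategy (piecewise-Euclidean metric, Gromov link condition, Cartan--Hadamard) is the standard and correct route and is surely the one taken in the cited source, but your metric is genuinely different: you use regular $n$-gons, so a corner of an $n$-gon contributes $(n-2)\pi/n$, whereas the paper's metric forces every corner to contribute exactly $2\cdot\pi/6=\pi/3$, pushing all the negative curvature of an $n$-gon with $n>3$ into its centre. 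The paper's choice buys two things you have to work for: the complex is simplicial from the outset (you concede a further subdivision may be needed because attaching maps are only immersions), and the link estimate at original $0$-cells reduces to a pure corner count with no angle bookkeeping.

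Two points in your verification need repair. First, the dichotomy ``interior $0$-cell with circle link / boundary $0$-cell with tree-like link'' is not available: in a general $2$-complex the link of a $0$-cell is an arbitrary graph, which may contain essential cycles without the vertex being internal to $X$ in any sense. The correct statement of Gromov's criterion here is that \emph{every} embedded cycle in \emph{every} link must have angular length at least $2\pi$, and your disc-diagram argument (an embedded $k$-cycle in the link of $v$ yields a reduced disc diagram with $v$ internal of valence $k$) is what must be run uniformly at all $0$-cells; the interior/boundary split should be dropped. Second, that argument as you state it does not close: $T(6)$ forbids internal $0$-cells of valence strictly between $2$ and $6$, so a $2$-cycle in the link --- two corners of $2$-cells at $v$ spanning the same pair of edge-germs, of total angle $2(n-2)\pi/n<2\pi$ in your metric --- is not excluded by $T(6)$. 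It must be ruled out separately by Pride's lemma: such a bigon exhibits a path of length $2$ through $v$ lying on two $2$-cells, hence a piece of length $2$, contradicting the fact that pieces in a $T(6)$ complex have length $1$ (one also needs the standing convention excluding two distinct $2$-cells with identical attaching maps, for which no piece arises). Finally, the reducedness of the diagram assembled from a link cycle is not automatic and deserves the explicit observation that a cancellable pair of corners would force a backtrack in the link cycle, contradicting that it is embedded. With these repairs your argument goes through.
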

	
Complex $\mathfrak{X}$ in the formulation is a triangle complex obtained from $X$ by taking a barycentric subdivision of each cell in $X$.

\section{Flats}\label{fl}

\begin{df}
Let $\mathbb{E}^2_{\triangle}$ denote the standard tiling of the Euclidean plane by equilateral triangles.
Let $X$ be a \cttss complex.
A flat in $X$ is an isometric embedding $\mathbb{E}^2_{\triangle}\rightarrow X$.
\end{df}

Although the same definition can be formulated using square or hexagonal tilings of the Euclidean plane, it is not suitable for $C(p)$--$T(q)$ small cancellation complexes when $q<5$.
 
Indeed, the conditions $C(p)$ (for any $p$) and $T(q)$ (for $q<5$) are preserved under subdivision of edges. 
Take a complex $X$ containing an isometrically embedded flat, and subdivide all edges belonging to a flat.
Such a subdivision does not change the fact that the flat is embedded, nor does it change the combinatorial structure reflected in the small cancellation complex.

However, the standard metric on a complex—where each edge has length~$1$ and each $n$–gon is a regular Euclidean polygon—makes it impossible to isometrically embed the entire $\mathbb{E}^2_{\hexagon}$ (resp. $\mathbb{E}^2_{\square}$) into $X$.
Moreover, even restricting attention to the $1$–skeleton (as is sometimes done, for example, in the systolic or quadric settings) is insufficient, since distances outside the flat may become shorter.

While one could choose a metric tailored to ensure the flat is isometrically embedded, this is both artificial and contrary to the natural viewpoint of small cancellation theory; moreover, we aim for a more general statement.

Instead, we use another natural notion of distance in small cancellation complexes, the \emph{gallery distance}. 

Given a pair of $2$-cells $F_1$ and $F_n$ in $X$ we say that $\mathfrak{G} :=\{F_1,\ldots,F_n\}$ is a \emph{gallery} between $F_1$ and $F_n$ in $X$ if $F_1\ldots F_n$ are distinct $2$-cells, each satisfying $F_i \cap F_{i+1} \neq \emptyset$ for $1 \le i \le n-1$, and if there is no smaller subfamily containing both $F_1$ and $F_n$ with this property.
 
The \emph{length} of a gallery $\mathfrak{G}$ is the number of $2$–cells it contains, denoted $l(\mathfrak{G})$.
The \emph{gallery distance} between $F_1$ and $F_2$ is the minimum of $l(\mathfrak{G})$ over all such galleries.

Following Wise \cite{Wise2004CubulatingSC} we define flat plane as a \css complex.

\begin{df}
A \emph{flat plane} $\mathfrak{F}_{\hexagon}$ is a \css complex that is homeomorphic to the Euclidean plane and has property that each closed $2$-cell intersects exactly six distinct neighboring closed $2$-cells. For a flat plane $\mathfrak{F}_{\hexagon}$ there is an associated flat plane $\mathfrak{F}'_{\hexagon}$ obtained by removing $0$-cells of valence $2$. The resulting complex $\mathfrak{F}'_{\hexagon}$ is a regular hexagonal tiling of the Euclidean plane. 
\end{df}

In the proof of the following proposition we will use numberings of the $2$-cells of a complex. The \emph{numbering} of the set $X_2$ of all $2$-cells of a complex $X$ is a bijection $\phi:\mathbb{N}\rightarrow X_2$.

\begin{pr}\label{pr:flat_planecs}
Let $X$ be a simply connectd \css complex without free $1$-cells (i.e.  there is no $1$-cell not belonging to any $2$-cell). 
If for every $2$-cell $F\in X$ the following hold: 
\begin{enumerate}
\item $F$ has non-empty intersection with exactly six other $2$-cells $F_1,\ldots F_6$ (ordered clockwise) and $F\cap (F_1\cup\ldots\cup F_6)=\partial F$.
\item $F\cap F_i\cap F_j$ is either empty or a single $0$-cell.
\item $F\cap F_i\cap F_j\cap F_k$ is empty for all triples $(i,j,k)$.
\end{enumerate}
then $X$ is a flat plane.
\end{pr}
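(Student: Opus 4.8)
The plan is to prove the conclusion in two stages: first show that $X$ is a $2$-manifold without boundary, and then use simple connectivity to identify it, up to homeomorphism, with the Euclidean plane. Observe that the two remaining requirements in the definition of a flat plane are essentially immediate, namely $X$ is \css by hypothesis, and condition~(1) says precisely that each closed $2$-cell meets exactly six neighbors. Thus the entire content of the proposition is the statement that $X$ is homeomorphic to $\mathbb{E}^2$.

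For the manifold property I would first check that every $1$-cell lies in exactly two $2$-cells. Since there are no free $1$-cells, each edge lies in at least one $2$-cell $F$, hence, being a subset of $\partial F=F\cap(F_1\cup\dots\cup F_6)$, in some intersection $F\cap F_i$; if it also lay in a third cell $F_j$ it would be contained in $F\cap F_i\cap F_j$, which by condition~(2) is at most a single $0$-cell, contradicting that an edge is $1$-dimensional. Next I would analyze the link $X_v$ of each $0$-cell. The absence of free $1$-cells forces $v$ to lie in at least two $2$-cells, while condition~(3) forbids $v$ from lying in a cell $F$ together with three of its neighbors, so at most three $2$-cells contain $v$. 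A short incidence count---each polygon at $v$ contributes two boundary edges there, and each such edge lies in exactly two cells---shows that the number of edges at $v$ equals the number of cells at $v$, giving either a bigon (valence-$2$ vertices) or a triangle (trivalent corners) as the link. In both cases $X_v\cong\mathbb{S}^1$, so $X$ is a surface without boundary.

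To finish I would pass to the associated complex $\mathfrak{F}'_{\hexagon}$ obtained by ignoring the valence-$2$ $0$-cells; here every vertex is trivalent and, by condition~(1) together with conditions~(2)--(3), every $2$-cell becomes a genuine hexagon, so locally this is the regular hexagonal tiling $\mathbb{E}^2_{\hexagon}$. Following the hint of the preceding paragraph, I would fix a numbering $\phi$ of the $2$-cells and construct, by induction on $\phi$, a combinatorial isomorphism of the growing subcomplex onto a subcomplex of $\mathbb{E}^2_{\hexagon}$: each newly added hexagon is attached along its intersection with the part already placed, and the rigid local structure determines the attachment. Since $X$ is simply connected it is in particular connected, and the construction exhausts all cells, so the resulting map is a combinatorial isomorphism $\mathfrak{F}'_{\hexagon}\cong\mathbb{E}^2_{\hexagon}$; as ignoring valence-$2$ vertices does not change homeomorphism type, $X$ is homeomorphic to $\mathbb{E}^2$.

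The main obstacle is the well-definedness of this inductive construction: when a newly placed cell is adjacent to several cells already placed, one must verify that the forced placements agree, i.e. that there is no combinatorial ``monodromy'' around any loop. This is exactly where simple connectivity of $X$ is indispensable, since it guarantees that every such loop bounds a disc diagram and therefore closes up consistently. (Equivalently, one may bypass the developing map and argue by the classification of surfaces: a simply connected surface is homeomorphic to $\mathbb{S}^2$ or to $\mathbb{E}^2$, and the trivalent/hexagonal structure of $\mathfrak{F}'_{\hexagon}$ forces vanishing combinatorial curvature, so $\chi=0$ on every compact piece, which rules out $\mathbb{S}^2$ and leaves $X\cong\mathbb{E}^2$.)
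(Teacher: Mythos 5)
Your proposal is correct and follows essentially the same strategy as the paper: after checking that all $1$-cells and $0$-cells are internal and that links are circles, both arguments identify $X$ with the hexagonal tiling by an inductive numbering of the $2$-cells (a developing map whose consistency rests on simple connectivity). Your preliminary verification that $X$ is a surface and the parenthetical alternative via the classification of surfaces and vanishing combinatorial curvature are reasonable additions but do not change the substance of the argument.
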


\begin{proof}
Note that any flat plane $\mathfrak{F}_{\hexagon}$ clearly satisifes Conditions (1)-(3).

The second part of Condition (1), together with \css condition implies that $F\cap F_i$ is a piece for every~$i$. 
Since there are no free $1$-cells, the second part of Condition (1) further implies that every $1$-cell of $X$ is internal.
Moreover, every $0$-cell of $X$ is internal as well. Indeed, since $X$ is simply connected and has no free $1$-cells, each $0$-cell belongs to some $2$-cell $F$. If a $0$-cell $v$ were non-internal, Condition~(1) would force $F\cap F_i = {v}$, a contradiction.

By Condition (3) every point of $\partial F$ belongs to at most two of $F_1,\ldots F_6$.
Combined with Condition~(2) and the fact that all $1$-cells and $0$-cells are internal, this implies that any point of $\partial F$ lying in three $2$–cells must be a $0$-cell of degree~3.
In particular, the link of such a $0$-cell is an embedded $3$–cycle.

It follows that every $1$-cell and every degree $2$ $0$-cell lies in exactly one intersection of the form $F\cap F_i$.

We now consider a numbering of the complex $X$. Let $C_0$ be a $2$-cell of $X$, and set $\phi(0)=C_0$. Define 
$$A_0=\{C \in X_2\setminus C_0\ : C\cap C_0 \neq \emptyset\}.$$ 
Choose $C_1 \in A_0$, set $\phi(1)=C_1$, and define 
$$A_{0,1}=\{C \in A_0\setminus C_1 : C\cap C_0\cap C_1 \neq \emptyset\}.$$ 
Condition (1) implies that $|A_0|=6$. Moreover, by Condition (2), each triple intersection of $2$-cells is a single $0$-cell. 
Together with the fact that each intersection of pair of cells is a piece, this implies that $|A_{0,1}|=2$. 
Denote the cells in $A_{0,1}$ by $C_2,C_3$, and set $\phi(2)=C_2$, $\phi(3)=C_3$.

Inductively, assume that $\phi$ has been defined on a subset of cardinality $k+1$, so $\phi(0)=C_0, \ldots, \phi(k)=C_k$.
For any $0\leq i\leq k$ define 
$$A_i=\{C \in X_2\setminus C_i\ : C\cap C_i \neq \emptyset\}$$
and for any $0\leq i <j\leq k$ define
 $$A_{i,j}=\{C \in A_i\setminus C_j : C\cap C_i\cap C_j \neq \emptyset\}.$$
For all $i,j$, we have $|A_i|=6$ and $|A_{i,j}|=2$.
Let $i$ be the smallest index such that $A_i\cap \{C_0,\ldots, C_k\}\neq A_i$.
Choose the smallest $j>i$ such that $A_{i,j}\cap \{C_0,\ldots, C_k\}\neq A_{i,j}$. Observe that $|A_{i,j}\cap \{C_0,\ldots, C_k\}|\geq 1$ for For $k>1$. Indeed, for each $k>1$ the cell $C_k$ is chosen from some $A_{i,j}$, where $i<j\leq k$, thus $C_j\in A_{i,k}$.
Let $C_{k+1}$ be the remaining cell from $A_{i,j}$ and set $\phi(k+1)=C_{k+1}$. 
This shows that numbering $\phi$ is uniquely determined by the choice of the first three cells.

Construct a numbering $\psi$ of the $2$-cells of $\mathfrak{F}_{\hexagon}'$ in the same way. Note that for any $I\subset\mathbb{N}$, $\bigcap\limits _{i\in I} \psi(i)\neq\emptyset$ if and only if $\bigcap\limits _{i\in I}\phi(i)\neq\emptyset$.

For any $2$-cell of $X$ there is a natural homeomorphism between it and a $2$-cell with the same number in $\mathfrak{F}_{\hexagon}'$.
Consider the bijection between $X$ and $\mathfrak{F}_{\hexagon}'$ obtained by taking these homeomorphisms so they agree on intersections of $2$-cells.
This map is clearly continuous, therefore $X$ is a flat planes.
\end{proof}

We can now define flats in \css complexes.

\begin{df}
Flat plane $\mathfrak{F}_{\hexagon}$ is a \emph{flat} in \css complex $X$ if $\mathfrak{F}_{\hexagon}\subset X$ and for any pair $(F_1,F_2)\in \mathfrak{F}_{\hexagon}$ where $F_1,F_2$ are $2$-cells in $\mathfrak{F}_{\hexagon}$ the gallery distance in $\mathfrak{F}_{\hexagon}$ equal the gallery distance between the same cells in $X$.
\end{df}

\section{Proof of Theorem \ref{flattorus36}}\label{cttss}

\begin{proof}
By Theorem \ref{thm:tF} the barycentric subdivision $\mathfrak{X}$ of $X$ can be induced with a metric $\mathfrak{d}$ that makes it CAT(0). This metric is obtained by taking each triangle to be Euclidean triangle with angle $\frac{\pi}{2}$ adjacent to a center of an $1$-cell of $X$, angle $\frac{\pi}{3}$ adjacent to a center of a $2$-cell of $X$ and angle $\frac{\pi}{6}$ adjacent to a $0$-cell from $X$.

By the Flat Torus theorem for CAT(0) spaces (Theorem \ref{prop:FPT}), if a free-abelian group of rank $n\geq 2$ acts metrically properly and semisimply on a CAT(0) space then there exists a subspace $V\times F$ with $F$ isometric to $\mathbb{E}^n$.
Since dimension of $\mathfrak{X}$ is $2$, then $n=2$ and $V\times F\cong F\cong \mathbb{E}^2$.

It remains to show that $F$ is also a flat in the original complex $X$.
Because $F$ is a flat in $\mathfrak{X}$, every vertex of $F$ in $\mathfrak{X}$ has link length exactly $2\pi$.

Consider first vertices arising from the centers of $1$–cells of $X$.
Such a vertex can belong to exactly two triangles of $\mathfrak{X}$, hence to exactly two $2$–cells of $X$, and those two $2$–cells must intersect along a piece.

If the vertex is a $0$–cell of $X$, then for its link to have length $2\pi$ it must lie in exactly twelve triangles of $\mathfrak{X}$, and therefore in exactly six $2$–cells of $X$ contained in $F$.

Vertices corresponding to centers of $2$–cells of $X$ can have link length $2\pi$ only if they lie in exactly six triangles of $\mathfrak{X}$—that is, only if the original $2$–cell is itself a triangle.

Moreover, the metric $\mathfrak{d}$ restricted to this flat $F$ agrees with the standard Euclidean metric. 
Thus $F$ corresponds to the standard tiling of the Euclidean plane by equilateral triangles.
\end{proof}

\section{Case of \css complexes}\label{css}
\subsection{Systolic complexes and nerve complex}\label{sys}

In this section we define systolic complexes and state several results concerning them and their relation to \css complexes.
The study of groups acting on systolic complexes were introduced by T. Januszkiewicz and J. Świątkowski \cite{JŚ06} and independently by Haglund \cite{syshag}.
Systolic complexes might be seen as a simplicial version of the cubical combinatorial nonpositive curvature theory of $\mathrm{CAT(0)}$ cube complexes. We folllow here the notation of \cite{JŚ06}.

\begin{df}
Let $X$ be a simplicial complex. 
$X$ is $k$-\textit{large} if $X$ is flag and every cycle in $X$ of length less than $k$ has a diagonal, i.e. in $X$ there is an edge connecting nonconsecutive vertices of the cycle.
\end{df}

\begin{lm}\cite[Lemma 1.3]{JŚ06}\label{13tjjs}
Suppose that $X$ is $k$-large and $\mathbb{S}^1_m$ denotes the triangulation of $\mathbb{S}^1$ with $m$ $1$-cells. If $m<k$ then any simplicial map $f:\mathbb{S}^1_m\rightarrow X$ extends to a simplicial map from disc $\mathbb{D}^2$, triangulated so that the triangulation on the boundary is $\mathbb{S}^1_m$ and so that there are no interior vertices in $\mathbb{D}^2$.
\end{lm}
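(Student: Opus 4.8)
The plan is to argue by induction on $m$, filling the $m$-gon bounded by $\mathbb{S}^1_m$ using diagonals only. Label the vertices of $\mathbb{S}^1_m$ cyclically $w_1,\dots,w_m$ and write $u_i=f(w_i)$; since $f$ is simplicial each consecutive pair $\{u_i,u_{i+1}\}$ spans a simplex of $X$ (an edge, or a single vertex when $u_i=u_{i+1}$). A triangulation of the $m$-gon with no interior vertices is exactly a decomposition into $m-2$ triangles cut out by diagonals between the $w_i$, so it suffices to repeatedly cut off pieces along diagonals $w_iw_j$ whose images $\{u_i,u_j\}$ span a simplex, and then extend $f$ across each resulting triangle. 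Once such a diagonal is found, it splits the $m$-gon into two polygons each with strictly fewer than $m$ (hence fewer than $k$) boundary edges, each carrying a simplicial boundary map, and I would apply the inductive hypothesis to both and glue the fillings.

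For the base case $m=3$ the three vertices $u_1,u_2,u_3$ are pairwise joined by edges (or coincide), so they form a clique in the $1$-skeleton; by flagness they span a simplex, and I fill $w_1w_2w_3$ with a single $2$-cell mapped onto it. The heart of the inductive step is producing a good diagonal when $m\ge 4$, which I would do by contradiction. Suppose no non-consecutive pair $w_i,w_j$ has $\{u_i,u_j\}$ spanning a simplex. Then no two consecutive images coincide: if $u_i=u_{i+1}$, then (as $m\ge 4$) the vertices $w_{i-1},w_{i+1}$ are non-consecutive and $u_{i-1}u_{i+1}=u_{i-1}u_i$ spans a simplex, a contradiction. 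Consequently $u_1\cdots u_m$ is a closed edge-path with distinct, adjacent consecutive vertices; and by assumption every non-consecutive pair is both distinct and non-adjacent. Hence $u_1,\dots,u_m$ is an embedded cycle of length $m<k$ with no chord, that is, a full cycle shorter than $k$, contradicting $k$-largeness. This forces the existence of a non-consecutive diagonal spanning a simplex, completing the step.

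The main thing to handle carefully is the degenerate behaviour: the images $u_i$ need not be distinct and consecutive edges may collapse to points, so the diagonals I cut along may map to vertices rather than edges, and the ``triangles'' I fill may map onto edges or single vertices. I would verify that such maps are still legitimately simplicial and that each cut strictly decreases the number of boundary edges (a non-consecutive chord always leaves at least three edges on each side), so the induction terminates and never introduces interior vertices. The two hypotheses enter in complementary ways: flagness supplies the base case and the fillings of degenerate triangles, while $k$-largeness is exactly what rules out the one irreducible configuration, namely an embedded chordless cycle of length $m<k$.
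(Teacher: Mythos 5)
This lemma is imported verbatim from Januszkiewicz--Świątkowski (it is stated with a citation and no proof in this paper), so there is no in-paper argument to compare against; your proof is correct and is essentially the standard one from the original source. The induction on $m$ -- cutting along a diagonal $w_iw_j$ whose image spans a simplex, using flagness for the $m=3$ base case and the degenerate collapsed triangles, and using $k$-largeness to rule out the only obstruction (an embedded chordless cycle of length $4\le m<k$) -- is exactly the right mechanism, and your care with coinciding images $u_i=u_j$ closes the usual gaps.
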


\begin{df}
A simplicial complex $X$ is \textit{systolic} if $X$ is simply connected, and links of all vertices of $X$ are $6$-large.
\end{df}

Let $X$ be a $2$-complex with embedded $2$-cells, such that every $1$-cell is contained in the boundary of some $2$-cell.
Let $\mathcal{U}$ be a cover of $X$.
The \textit{nerve} of the cover $\mathcal{U}$ is a triangle complex whose vertex set is $\mathcal{U}$, and vertices $U_0,\ldots, U_n$ span an $n$-simplex if and only if $\bigcap\limits_{0\leq i\leq n}U_i\neq \emptyset$.

If $X$ is a simply connected \css complex, then the nerve of the cover of $X$ by closed $2$-cells is systolic \cite[Theorem 10.6]{wise}; see also \cite{OP18}.

\begin{df}
A flat in a systolic complex $X$ is an isometric embedding of a $1$-skeleton of $\mathbb{E}^2_{\triangle}$ into $X$.
\end{df}

Elsner \cite{Elsner_flattorus} proved that uniformly locally finite systolic complexes satisfy an analogue of the Flat Torus Theorem.
While proving analogous result for quadric complexes, Hoda and Munro managed to omit this restriction to the case of uniformly locally finite complexes by passing to metrically proper actions. Their argument also holds in the systolic case hence we obtain the following result.

\begin{theorem}[Systolic Flat Torus Theorem]\label{thm:els}
Let $G$ be a non-cyclic free abelian group. Let $G$ act metrically properly by simplicial automorphisms on a systolic complex $X$. Then $G\cong \mathbb{Z}^2$ and there is a $G$-invariant flat in $X$.
\end{theorem}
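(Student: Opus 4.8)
The plan is to follow the combinatorial strategy familiar from other nonpositively curved settings, adapted to the systolic structure, rather than to metrize $X$ and invoke Theorem~\ref{prop:FPT}: the standard metric on a systolic complex need not be CAT(0), nor need such a metric be preserved by the action, so the route used for \cttss complexes in Theorem~\ref{thm:tF} is unavailable. I would split the statement into two parts: first that the rank of $G$ is forced to equal $2$, and second the construction of a $G$-invariant flat once $G\cong\mathbb{Z}^2$. Throughout, note that since the action is proper and $G$ is infinite, every nontrivial $g\in G$ acts with unbounded orbits and admits an invariant bi-infinite combinatorial geodesic (an \emph{axis}); a kind of semisimplicity is automatic here, as a proper simplicial action has no infinite bounded orbit.

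For the rank bound I would argue that a genuine systolic flat is necessarily $2$-dimensional, so that running the flat-building construction for a putative proper $\mathbb{Z}^n$ action with $n\ge 3$ leads to a contradiction. The systolic hypothesis enters precisely here: an $n$-flat would locally carry the simplicial tiling $\mathbb{E}^n_{\triangle}$, whose vertex links are triangulations of $\mathbb{S}^{n-1}$, and for $n-1=2$ such a link cannot be $6$-large. Indeed, a flag triangulation of $\mathbb{S}^2$ with no empty cycle of length $<6$ would force every vertex to have degree $\ge 6$, contradicting the Euler-characteristic count that the average vertex degree of a triangulated $\mathbb{S}^2$ is strictly less than $6$. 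Hence $n\le 2$, and non-cyclicity forces $n=2$.

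For the construction when $G\cong\mathbb{Z}^2=\langle a,b\rangle$, I would fix an axis $\alpha$ of $a$. Since $b$ commutes with $a$, the translate $b\alpha$ is again an $a$-axis, and I would span the region between $\alpha$ and $b\alpha$ by a minimal-area disc diagram $D\to X$ (Lemma~\ref{lvk}). The key claim is that minimality together with $6$-largeness forces $D$ to be a \emph{flat strip}: the link of any interior vertex of a minimal triangulated diagram is a full cycle, and $6$-largeness bounds its length below by $6$, i.e.\ forces interior degree $\ge 6$ (combinatorial nonpositive curvature); a combinatorial Gauss--Bonnet argument, using that the bounding axes are geodesic, then forces interior degree to be exactly $6$, so $D$ has the combinatorial type of a strip in $\mathbb{E}^2_{\triangle}$, with Lemma~\ref{13tjjs} controlling the local fillings. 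Translating this strip by powers of $a$ and $b$ and checking compatibility along shared axes, I would glue the strips into an embedded, $G$-invariant copy of $\mathbb{E}^2_{\triangle}$.

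The hard part, and the genuine content of the systolic theory of flat minimal surfaces, is twofold. First, one must prove the minimal diagrams are truly flat rather than merely nonpositively curved; this rests on the convexity of axes and of minimal displacement sets $\mathrm{Min}(g)$ in systolic complexes, and on ruling out, by an area-reduction argument, any interior vertex of degree $\ne 6$. Second, one must upgrade the combinatorial gluing to an \emph{isometric} embedding of the $1$-skeleton of $\mathbb{E}^2_{\triangle}$, as required by the definition of a flat: one must show the assembled plane admits no shortcuts in $X$, i.e.\ that systolic distances between its vertices agree with intrinsic distances in $\mathbb{E}^2_{\triangle}$. This follows from convexity of the flat as a subcomplex, and establishing that convexity is the delicate step at the heart of the argument.
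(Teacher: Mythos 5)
This statement is not proved in the paper at all: it is Elsner's theorem, quoted verbatim from \cite{Elsner_flattorus} (Theorem 6.1 there) and used as a black box in the proof of Theorem \ref{flattorus6}. So there is no internal argument to compare against; what you have written is an attempt to reprove a substantial external result. Your sketch does track the spirit of Elsner's actual proof --- minimal surfaces spanning translates of axes, combinatorial Gauss--Bonnet forcing interior degree $6$, and convexity of the resulting flat to rule out shortcuts --- and you are candid that the flatness and convexity steps are the real content. But as a proof it has genuine gaps beyond the two you flag.

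The most serious one is the rank bound. Your argument shows only that the vertex links of the tiling $\mathbb{E}^n_{\triangle}$ for $n\ge 3$ are not $6$-large, i.e.\ that a $3$-dimensional combinatorial flat of that specific type cannot sit in a systolic complex. Nothing in your construction forces a proper $\mathbb{Z}^3$-action to produce such a tiling: the ``flat-building construction'' you invoke is designed for $\mathbb{Z}^2$ and produces a $2$-complex, so running it for $n\ge 3$ does not manufacture the object whose links you then analyze. Elsner's exclusion of $\mathbb{Z}^3$ goes through the $\mathbb{Z}^2$ case first and then uses the structure theory of flats (essentially a uniqueness/rigidity statement for the invariant flat of a $\mathbb{Z}^2$-subgroup) to derive a contradiction from a third commuting generator; the Euler-characteristic count on $\mathbb{S}^2$ is a heuristic, not a substitute. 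A second gap is your opening claim that ``a kind of semisimplicity is automatic'' and that every nontrivial element admits an invariant bi-infinite combinatorial geodesic. For simplicial actions on systolic complexes this is a theorem in its own right (Elsner's work on isometries of systolic spaces), not a formal consequence of properness, and your whole strip construction rests on it. Finally, the region between $\alpha$ and $b\alpha$ is an infinite strip, so Lemma \ref{lvk} does not apply directly; one needs finite truncations and a limiting argument before Gauss--Bonnet can be run. None of these is fatal to the strategy, but each is a missing ingredient rather than a routine verification.
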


\begin{proof}
Original proof of Elsner needs systolic complex $X$ to be uniformly locally finite in its first step, to prove that there exists a finite-index subgroup $H\mathbb{Z}^{2}$ and (possibly not systolic) triangulation of an $H$-invariant plane $\widetilde{T}$, such that every non-trivial element of $H$ has translation length at least $3$.

Suppose $\mathbb{Z}^2$ acts metrically properly (hence freely) on a systolic complex $X$ with quotient $\phi: X\rightarrow X/\mathbb{Z}^2$.
By \cite[Theorem C of Appendix]{flatquadric}, there exists a map from a torus $f: T\to X/\mathbb{Z}^2$ so that $f_{\star}:\pi_1 T \to \pi_1 X/\mathbb{Z}^2$ is an isomorphism. Let $d$ be the pseudometric on $\widetilde{T}$ induced by pulling back the metric on $X$ via the $\mathbb{Z}^2$-equivariant map $\tilde{f}:\widetilde{T}\to X$. The action of $\mathbb{Z}^2$ on $(\widetilde{T},d)$ is metrically proper and is cofinite on $\widetilde{T}^{(0)}$. Thus, letting $x_1,\ldots x_k\in \widetilde{T}^{(0)}$ be a set of orbit representatives, there are finitely many elements $g\in \mathbb{Z}^2$  that satisfy $d(x_i,gx_i)\leq 3$ for some $i\in \{1,\ldots, k\}$. A finite-index subgroup $H<\mathbb{Z}^2$ that omits all such $g$'s, has translation length of at least $3$ for any $h\in H\setminus\{\mathrm{id}\}$.

The rest of the proof follows the proof of Elsner.
\end{proof}
 
Observe that a $6$-cycle in a systolic complex can be filled in multiple ways by six triangles sharing common internal vertex (see Figure \ref{fig:trianglegrid}).

\begin{figure}[H]
\begin{center}
\includegraphics[scale=1.2]{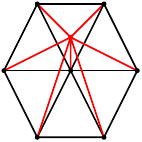}
\end{center}
\caption{$6$-cycle in a systolic complex filled by two discs consisting of six triangles.}\label{fig:trianglegrid}
\end{figure}

As a consequence, flats in systolic complex may differ by a single vertex. Therefore for any $G$-invariant flat in systolic complex, there may exist another $G$-invariant flat obtained by small deformations applied equivariantly under the $G$-action.

\begin{df}\label{thickflatsys}
Two flats $F$, $F'$ are \emph{equivalent} if they are at finite Hausdorff distance.
Let $F \colon \mathbb{E}^2_{\triangle} \to X$ be a flat in a quadric complex. The \emph{thick flat} $\mathrm{Thick}(F)$, is the full subcomplex spanned on all flats at finite Hausdorff distance from $F$.
\end{df}

As a corrolary of his results, Elsner further obtained the following version of the Flat Torus Theorem applicable to virtually free abelian groups and thick flats.

\begin{thm}[Corrolary 6.2. of\cite{Elsner_flattorus}]\label{virtflatsys}
    Suppose a virtually $\mathbb{Z}^2$ group $G$ acts metrically properly on a systolic complex $X$. Then there is a $\mathbb{Z}^2$-invariant flat in $F$ in $X$, such that $\mathrm{Thick}(F)$ is $G$-invariant.
\end{thm}

For the main result in \css case, we require a final characterization of equivalent flats, originally observed by Elsner. Note that we adopt a modified version of the notation introduced by  Hoda-Munro.
We say that two distinct flats $F, F'$ in a systolic complex $X$ \emph{differ by vertex move} if $F'$ can be obtained from $F$ by replacing a single vertex $v\in F$ by $w\in F'$.

\begin{lm}\label{equivalentFlatssys}
    Suppose that $F$ and $F'$ are equivalent. Then there exists  a sequence of flats $F = F_0, F_1, F_2, \ldots$ such that $F_{i+1}$ differs from $F_i$ by a vertex move, for each $i$, and $F_i \to F'$ pointwise as $i\to \infty$.
\end{lm}

\if
Unfortunately such nerve complex provides less information about vertices of the original \css complex than quadrization about vertices of the \cftfs complex, namely it does not provide information about the gallery distance between vertices. Therefore we define analogue of quadrization, called systolization.

Let $X$ be a $2$-complex with a boundary of each $2$-cell being an embedded cycle. Let $X_0, X_2$ be the sets of $0$-cells and $2$-cells of $X$. Let $\Gamma_X$ be a graph on the vertex set $X_0\cup X_2$ where an edge joins $v\in X_0$ with $F\in X_2$ whenever $v\in \partial F$, and $v_1,v_2\in X_0$ whenever $(v_1,v_2)\in \partial F_1\cap \partial F_2$. 

\begin{df}
The \textit{systolization} $Y$ of a complex $X$ is the $3$-flag completion $Y=\overline{\Gamma_X}$ i.e.~a complex obtained from $\Gamma_X$ by spanning a $2$-cell on each of its nontrivial $3$-cycles. 
\end{df}

If we additionally assume that every $1$-cell of $X$ is contained in the boundary of a $2$-cell then simply connectedness of $X$ implies simply connectedness of its systolization. It is clear as quadrization of such complex is simply connected, and systolization can be obtained from the quadrization by subdividing each square into two triangles, which clearly does not break simply connectedness, and then spanning triangle on each $3$-cycle that was constructed in the process.

\begin{lm}
Let $X$ be a simply connected $2$-complex. If $X$ is \css then its systolization $Y$ is systolic.
\end{lm}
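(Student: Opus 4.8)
The plan is to check the two defining conditions of a systolic complex for $Y$ directly: that $Y$ is simply connected, and that the link of every vertex is $6$-large. Since $Y$ is the $3$-flag completion of $\Gamma_X$, it is flag as soon as $\Gamma_X$ carries no $4$-clique, so a first preliminary step is to rule these out and thereby confirm that $Y$ is a genuine two-dimensional flag complex. A $4$-clique would consist of at most one $2$-cell vertex together with $0$-cells that pairwise lie on a common pair of $2$-cells; the resulting family of mutually intersecting $2$-cells is tightly constrained by the Helly-type Lemma \ref{ver} and the strong Helly Lemma \ref{stronghellypiece}, and together with the fact that the \css condition forbids a $2$-cell with fewer than six sides this excludes such a clique. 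I will also assume, as in the construction of the systolization, that every $1$-cell of $X$ lies on the boundary of some $2$-cell, which guarantees that $Y$ is connected.

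For simple connectedness I would route through the quadrization $Q(X)$. Because $X$ is simply connected and every $1$-cell lies on a $2$-cell, $Q(X)$ is simply connected, and $Y$ is obtained from $Q(X)$ by subdividing each square along the diagonal through its two $0$-cell corners and then spanning the $3$-cycles that appear. As subdivision and the spanning of $3$-cycles leave the fundamental group unchanged, $Y$ is simply connected.

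The core of the argument is the $6$-largeness of the links, and the fact that $\Gamma_X$ has no edge between two $2$-cell vertices splits it into two cases. For a $2$-cell vertex $F$, the link $Y_F$ is carried by the boundary cycle $\partial F$: its vertices are the $0$-cells of $\partial F$, and two are adjacent exactly when they lie together on a second $2$-cell. The \css condition makes $\partial F$ a concatenation of at least six pieces, so the cycle through consecutive $0$-cells already has length at least six; and by Lemma \ref{ver} any second $2$-cell meets $F$ in a single connected piece, so every extra adjacency it contributes joins the two ends of one arc whose interior vertices are mutually adjacent. Hence $Y_F$ has no empty cycle shorter than six and is $6$-large. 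For a $0$-cell vertex $v$, the link $Y_v$ is the graph joining the $2$-cells through $v$ to the $0$-cells adjacent to $v$, a $2$-cell being joined to a $0$-cell when the corresponding $1$-cell lies on its boundary. Here one must exclude empty $4$- and $5$-cycles. Such a cycle is an alternating chain of $2$-cells and $0$-cells closing up around $v$; by Lemma \ref{lvk} it bounds a reduced disc diagram of small area through $v$, and feeding this configuration into the strong Helly Lemma \ref{stronghellypiece} forces two of the cells in the chain to share a further $0$-cell, which is precisely the diagonal demanded by $6$-largeness.

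The main obstacle is exactly this last point, the $6$-largeness of the $0$-cell links. This is where the small cancellation hypothesis is used in an essential way: each candidate empty $4$- or $5$-cycle must be translated into a configuration of $2$-cells mutually intersecting around $v$ — some of them possibly meeting along corners or longer pieces — and the strong Helly property of \css complexes must be applied with care to extract the missing diagonal or to contradict the \css condition outright. By comparison the $2$-cell links, the flagness and two-dimensionality of $Y$, and the passage to simple connectedness through the quadrization are all comparatively routine once the Helly lemmas are available.
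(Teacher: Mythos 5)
Your skeleton agrees with the paper's in outline: simple connectedness is obtained exactly as there, by viewing the systolization as the quadrization with each square subdivided and the new $3$-cycles filled, and $6$-largeness of links is to be settled by a case analysis over embedded $4$- and $5$-cycles using the Helly-type Lemmas \ref{ver} and \ref{stronghellypiece} together with disc diagrams. But the heart of the lemma --- actually producing a diagonal in every short empty cycle --- is exactly the step you flag as the ``main obstacle'' and do not carry out, and the tool you nominate for it cannot do the job. Lemma \ref{stronghellypiece} requires three pairwise-intersecting $2$-cells whose triple intersection is a piece; in the hardest case, a cycle $(x_1,\dots,x_n)$ with $n\le 5$ consisting entirely of $0$-cells, the hypothesis only gives that each consecutive pair is joined by a $1$-cell lying on two $2$-cells, and there is no triple of pairwise-intersecting $2$-cells available to feed into that lemma. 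What the paper actually uses here is quantitative: the corresponding closed path in $X^{(1)}$ is nullhomotopic, so by Lemma \ref{lvk} it bounds a reduced disc diagram of minimal area, and in a \css complex such a diagram is either a single $2$-cell or has at least two shells --- both impossible when the boundary decomposes into at most five pieces. That Greendlinger-type shell count, not the Helly lemma, is the engine that forces the diagram to degenerate to a tree and hence yields the diagonal; it is absent from your proposal. The same mechanism is what the paper uses for the mixed cycles containing one $2$-cell vertex, where it must rule out the $2$-cell itself appearing as the unique cell of the diagram.

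There are also inaccuracies in the parts you do argue. In $\Gamma_X$ two $0$-cells are adjacent only when the $1$-cell joining them lies on two distinct $2$-cells; consequently the two endpoints of a piece of length at least $2$ are \emph{not} adjacent, so your description of the link of a $2$-cell vertex $F$ (``every extra adjacency joins the two ends of one arc whose interior vertices are mutually adjacent'') is wrong as stated, and that link may in addition contain chords of $\partial F$ coming from $1$-cells of $X$ not on $\partial F$, which can create short cycles that again need the disc-diagram argument to be excluded. Likewise the link of a $0$-cell $v$ is not the bipartite graph you describe: two $0$-cells adjacent to $v$ may be adjacent to each other, so the short cycles to be treated are not only alternating chains of $2$-cells and $0$-cells.
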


\begin{proof}
Since $X$ is simply connected $Y$ is also simply connected. By the construction of systolization, link of each vertex is flag, and it suffices to show that every cycle of length less than six has a diagonal.

Observe that whenever in a cycle in the link there is a vertex corresponding to some $2$-cell $F$, then two of edges in a cycle join $F$ with vertices at its boundary.
Moreover any cycle of length less than six joining only vertices of $X$ corresponds to a tree in $X^1$. Otherwise there would be a $2$-cell bounded by a cycle containing of less than six pieces.  

An embedded cycle of length $4$ corresponds either to
\begin{itemize}
\item pair of $2$-cells intersecting along a piece;
\item single $2$-cell $F$ and three vertices, one of them possibly outside $\partial F$;
\item four vertices.
\end{itemize}

There is a diagonal in each case.

If a pair of $2$-cells intersects along a piece, then two vertices coming from the end ofthe piece are joined by an edge.

A single $2$-cell $F$ is joined by an edge to all vertices in its boundary. If one of vertices, say $x_3$ does not belong to $F$ then $(x_1,x_2)$ and $(x_2,x_3)$ are pieces of $X$. If $x_1$ and $x_3$ are not connected, then there is no piece containing them both.
There are two nullhomotopic cycles $(x_1,x_2,x_3)$ in $X$, going through two sides of boundary of $F$ between $x_1$ and $x_3$. 
Let $D$ be a minimal area disc diagram in $X$ bounded by any of $(x_1,x_2,x_3)$. It consists of a single cell or at least two shells. A priori $F$ might be a cell depending on the choice of cycle, but it cannot be a single cell as then it contains $x_2$ and if there is another cell then it is possible to show that it is bounded by three cells at most.

Vertices $(x_1,x_2,x_3,x_4)$ form a cycle if there are pieces $(x_1,x_2),(x_2,x_3),(x_3,x_4),(x_4,x_1)$ in $X$. There are no diagonal only if these pieces form a non nullhomotopic cycle in $X$. Let $D$ be a minimal area disc diagram in bounded by $x_1,x_2,x_3,x_4$. It consists of a single cell or at least two shells. But it is impossible as it is bounded by four pieces at most.

An embedded cycle of length $5$ corresponds either to 
\begin{itemize}
\item pair of $2$-cells intersecting along a piece and a single vertex that belongs to at least one of them;
\item single $2$-cell $F$ and four vertices, two of them possibly outside $\partial F$;
\item five vertices.
\end{itemize}

There is a diagonal in each case.
Again if a pair of $2$-cells intersects along a piece, then two vertices coming from the end ofthe piece are joined by an edge.

A single $2$-cell $F$ is joined by an edge to all vertices in its boundary. If one of vertices, say $x_3$ does not belong to $F$ then obvious diagonal is $(F,x_2)$. Thus two vertices does not belong to $F$, say $x_2$ and $x_3$. Then $(x_1,x_2)$, $(x_2,x_3$ and $(x_3,x_4)$ are pieces of $X$.

There are two nullhomotopic cycles $(x_1,x_2,x_3,x_4)$ in $X$, going through two sides of boundary of $F$ between $x_1$ and $x_4$. 
Let $D$ be a minimal area disc diagram in $X$ bounded by any of $(x_1,x_2,x_3,x_4)$. It consists of a single cell or at least two shells. A priori $F$ might be a cell depending on the choice of cycle, but it cannot be a single cell as then it contains $x_3$ and $x_4$ and if there is another cell then it is possible to show that it is bounded by four cells at most.

If vertices $(x_1,x_2,x_3,x_4,x_5)$ form a cycle then there are pieces $(x_1,x_2),(x_2,x_3),(x_3,x_4),(x_4,x_5),(x_5,x_1)$ in $X$. There are no diagonal only if these pieces form a non nullhomotopic cycle in $X$. Let $D$ be a minimal area disc diagram in bounded by $(x_1,x_2,x_3,x_4,x_5)$. It consists of a single cell or at least two shells. But it is impossible as it is bounded by five cells at most.
\end{proof}

\fi

\subsection{Flat torus for \css complexes}\label{flatcss}

We begin by observation that flats in systolic duals to \css small cancellation complexes are unique. 

\begin{lm}\label{lm:equivflatdiffsys}
Let $X$ be a simply-connected \css complex and $Y$ its nerve complex.
Let $F, F'$ be flats in $Y$. If $F$ and $F'$ are equivalent then $F=F'$.
Consequently $\mathrm{Thick}(F)=F$.
\end{lm}

\begin{proof}
Assume that $F'$ is equivalent to $F$. By Lemma \ref{equivalentFlatssys} there exists a sequence of flats $F_1,F_2 \ldots$ such that each $F_{i+1}$ differs from $F_i$ by a vertex move. Hence without loss of generality we may assume that $F'$ differs from $F$ by a single vertex move replacing vertex $v$ with vertex $w$, both adjacent to the $6$-cycle $\{v_1,v_2,v_3,v_4,v_5,v_6\}$.
Vertices $v,w,v_1,\ldots, v_6 $ correspond to $2$-cells $F_v, F_w, F_1,\ldots, F_6$.

By Lemma \ref{lem:intersections}, the intersection of two $2$-cells is connected, thus intersection of $F_v$ and $F_w$ is either a piece or a single vertex. Moreover, by the same lemma, any intersection $F_v\cap F_w\cap F_i\cap F_{i+1}$ is also connected and at least a vertex.

We claim that $F_v\cap F_w$ is a piece. Indeed, if it were a single vertex ,then each intersection $F_v\cap F_w\cap F_i$ would coincide with that same vertex for each $i$, implying that $F_i\cap F_j\neq\emptyset$ for all $1\leq i,j\leq 6$. 
Consequently $F$ would fail to be a flat, as all vertices in a cycle $\{v_1,v_2,v_3,v_4,v_5,v_6\}$ are adjacent to each other, contradicting the fact that $F$ is isometrically embedded.

Since each intersection $F_v\cap F_w\cap F_i$ and $F_v\cap F_w\cap F_i\cap F_{i+1}$ is non-empty and connected, $F_w$ covers at least four consecutive pieces $F_i\cap F_v$.
 
Because $X$ is \css complex, the boundary $\partial F_v$ cannot be entirely covered by pieces $F_v\cap (F_1\cup \ldots  \cup F_6)$. These pieces therefore form a closed path in $\partial F_v$, traversed in both directions by those six pieces.

It follows that some $F_i$ intersects some $F_j$ with $j\neq i\pm 1 \pmod 6$, implying that $v_i$ and $v_j$ are adjacent in $Y$.
But distance between $F_i$,$F_j$ in $F$, for $j\neq i\pm 1 \pmod 6$ is at least $2$, contradicting the facy that $F$ is isometrically embedded.
\end{proof}

\begin{lm}\label{flatequiv}
Let $X$ be a simply-connected \css small cancellation complex and $Y$ be its nerve complex. Let $E$ be a subcomplex of $X$ and $E_Y$ be its nerve complex.
Then $E$ is a flat in $X$ if and only if $E_Y$ is a flat in $Y$.  
\end{lm}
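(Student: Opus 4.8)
The plan is to reduce both notions of flatness to statements about the nerve and to exploit that the $1$–skeleton of the nerve is precisely the gallery graph of the $2$–cells.

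First I would record the basic dictionary between the two sides. By definition the $1$–skeleton $Y^{(1)}$ of the nerve has the $2$–cells of $X$ as vertices, with an edge joining two of them exactly when they intersect; this is literally the adjacency relation used to form galleries. Hence $Y^{(1)}$ coincides with the gallery graph of $X$, and the gallery distance between two $2$–cells equals their combinatorial distance in $Y^{(1)}$ shifted by the constant $1$ (a gallery counts cells, a path counts edges). The same holds for $E$ and $E_Y$. Consequently the clause ``the gallery distance in $E$ equals the gallery distance in $X$'' is equivalent to the inclusion $E_Y^{(1)}\hookrightarrow Y^{(1)}$ being an isometric embedding of graphs. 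I would also note at the outset that, since $E$ is a subcomplex of $X$, two $2$–cells of $E$ meet in $E$ exactly as they do in $X$, so $E_Y$ is the full subcomplex of $Y$ spanned by the vertices corresponding to the $2$–cells of $E$.

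Second I would identify the nerve of a flat plane. Using Lemma \ref{ver} (pairwise intersecting $2$–cells of a \css complex have nonempty common intersection), the nerve of any \css complex is a flag complex, so $E_Y$ is determined by $E_Y^{(1)}$; the triangular tiling $\mathbb{E}^2_{\triangle}$ is likewise flag. I claim that $E$ is a flat plane if and only if $E_Y\cong\mathbb{E}^2_{\triangle}$. For the ``only if'' part, a flat plane satisfies Conditions (1)--(3) of Proposition \ref{pr:flat_planecs}, and translating these through the nerve construction—six neighbours per cell give degree–$6$ vertices, triple points give triangles, and the absence of quadruple intersections rules out tetrahedra—shows that its nerve is exactly the dual triangular tiling. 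For the ``if'' part I would feed the conditions back into Proposition \ref{pr:flat_planecs}: degree $6$ at every vertex of $E_Y\cong\mathbb{E}^2_{\triangle}$ yields the first clause of Condition (1); the triangles and the two–dimensionality of $\mathbb{E}^2_{\triangle}$ give Conditions (2) and (3), with Lemma \ref{stronghellypiece} used to upgrade each triple intersection from a piece to a single $0$–cell; and the classical Nerve Lemma (the pairwise and triple intersections being contractible) transfers simple connectedness from $\mathbb{E}^2_{\triangle}$ to $E$, after which Proposition \ref{pr:flat_planecs} certifies that $E$ is a flat plane. Combining the two equivalences now yields the lemma: if $E$ is a flat in $X$ then $E$ is a flat plane, so $E_Y\cong\mathbb{E}^2_{\triangle}$, and the preserved gallery distances make $E_Y^{(1)}\hookrightarrow Y^{(1)}$ isometric, whence $E_Y$ is a flat in $Y$; conversely, if $E_Y$ is a flat in $Y$ then $E_Y^{(1)}$ is an isometric copy of $(\mathbb{E}^2_{\triangle})^{(1)}$, flagness upgrades this to $E_Y\cong\mathbb{E}^2_{\triangle}$, so $E$ is a flat plane, and the isometry of $1$–skeletons returns the equality of gallery distances.

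The main obstacle I expect is the converse half of the second step: extracting the full geometric content of a flat plane—in particular the covering clause $F\cap(F_1\cup\cdots\cup F_6)=\partial F$ of Condition (1), the single–$0$–cell conclusion of Condition (2), and the simple connectedness and \css hypotheses on $E$ needed to invoke Proposition \ref{pr:flat_planecs}—from the purely combinatorial datum that $E_Y\cong\mathbb{E}^2_{\triangle}$. The nerve remembers which cells meet but not how much, so these points genuinely require Lemmas \ref{ver} and \ref{stronghellypiece} together with the Nerve Lemma, and this is where the argument must be carried out with care.
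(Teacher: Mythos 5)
Your overall route is the same as the paper's: identify the gallery metric on $2$--cells with the graph metric on the nerve, and use Proposition~\ref{pr:flat_planecs} together with Lemmas~\ref{ver} and~\ref{stronghellypiece} to pass between ``$E$ is a flat plane'' and ``$E_Y$ is a copy of $\mathbb{E}^2_{\triangle}$''. Your repackaging into two independent equivalences (flat plane $\Leftrightarrow$ $E_Y\cong\mathbb{E}^2_{\triangle}$, and gallery distances preserved $\Leftrightarrow$ $E_Y^{(1)}\hookrightarrow Y^{(1)}$ isometric) is legitimate, and slightly cleaner than the paper's, precisely because of the observation you make at the outset: since $E$ is a subcomplex, intersections of its $2$--cells are the same computed in $E$ or in $X$, so $E_Y$ is the \emph{full} subcomplex of $Y$ on those vertices. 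This is what lets you run the verifications of Conditions~(2) and~(3) intrinsically in $E_Y$ (a nested pair of pieces would leave $\partial F$ covered by five pieces, contradicting $C(6)$; a quadruple intersection would give a tetrahedron, which $\mathbb{E}^2_{\triangle}$ does not have), where the paper instead phrases these as violations of the isometric embedding into $Y$. Your use of the Nerve Lemma to get simple connectedness of $E$ is also a sensible addition; the paper takes this for granted.

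The genuine gap is the one you name but do not close: the covering clause $F\cap(F_1\cup\dots\cup F_6)=\partial F$ of Condition~(1). You cannot leave this as ``requires care'' --- it is the one place where the nerve does not obviously control the geometry, and it is needed before the Condition~(2) argument (nested pieces contradict $C(6)$) even makes sense. The paper's argument is short and you should supply it: the six pieces $F\cap F_i$ are arcs in $\partial F\cong\mathbb{S}^1$, and consecutive ones overlap because $F\cap F_i\cap F_{i+1}\neq\emptyset$ (the triangle $\{x,x_i,x_{i+1}\}$ in $E_Y$ plus Lemma~\ref{ver}). If their union were a proper sub-arc of $\partial F$, the cyclic sequence of overlapping arcs would have to double back on itself, forcing $F_i\cap F_j\neq\emptyset$ for some $j\neq i\pm1\pmod 6$; since $E_Y$ is full, $x_i$ and $x_j$ would then be adjacent in $E_Y$, contradicting the fact that non-consecutive neighbours of a vertex in $\mathbb{E}^2_{\triangle}$ are at distance $2$. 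With that paragraph inserted, your proof is complete and matches the paper's in substance.
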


\begin{proof}
First observe that every gallery in $X$ corresponds to a path of the same length in $Y$. 
In particular, the gallery distance between two $2$-cells in $X$ equals the distance between vertices corresponding to these $2$-cells in the nerve of $X$.
Consequently, it is impossible for only one of $E$ or $E_Y$ to be isometrically embedded, and it remains to verify that $E$ is a flat plane if and only if $E_Y$ is a flat.

\medskip
\noindent($\Leftarrow$)
If $E$ is a flat plane, then its nerve $E_Y$ is clearly a copy of $\mathbb{E}^2_{\triangle}$ embedded in $Y$.

\medskip
\noindent($\Rightarrow$)
Assume now that $E_Y$ is a flat in $Y$. Let $E$ be a subcomplex of $X$ consisting of all $2$-cells of $X$ corresponding to vertices of $E_Y$.
Denote by $F_i$ the $2$–cell of $X$ corresponding to a vertex $x_i\in Y$ (possibly without index). It is clear that $E$ does not contain free $1$-cells. We will show that $E$ satisfies Conditions (1)-(3) of Proposition \ref{pr:flat_planecs}.

\medskip
\noindent\emph{Condition (1).} and the fact that each edge belongs to a piece. 
Observe that boundary of any $2$-cell of $E$ is fully covered by six other cells $F_1,\ldots F_6$. Take a vertex $x$ in $E_Y$. It is adjacent to exactly $6$ other vertices $x_1,\ldots x_6$ forming a cycle $(x_1,\ldots x_6)$.
No additional $2$–cell $F_7$ may intersect $F$: otherwise $x_7$ would be adjacent to $x$ in $Y$ while their distance in $E_Y$ is at least $2$, contradicting again that $E_Y$ is isometrically embedded in $Y$.
Suppose that $\partial F$ is not fully covered in $E$ by pieces $F\cap (F_1\cup \ldots  \cup F_6)$. Then these pieces form a closed path in $\partial F$, which is traversed by these six pieces in both directions.
Hence some $F_i$ intersects some $F_j$, $j\neq i\pm 1 \pmod 6$, implying that $x_i$ and $x_j$ are adjacent in $Y$.
But distance between $F_i$,$F_j$, for $j\neq i\pm 1 \pmod 6$ is at least $2$.
But then the distance between $F_i$ and $F_j$ in $E_Y$ is $2$, whereas in $Y$ it is $1$, contradicting the assumption that $E_Y$ is isometrically embedded.
Thus Condition (1) of Proposition \ref{pr:flat_planecs} is satisfied.

\medskip
\noindent\emph{Condition (2).}
Let $F_1,F_2,F_3$ be pairwise intersecting $2$–cells in $E$.
By Lemma \ref{lem:intersections}, their intersection is nonempty, and in $E$ they share either a common vertex or a piece.
By Lemma \ref{stronghellypiece}, we may assume $F_1\cap F_2\subseteq F_3$.
Since $X$ satisfies the \css condition, the boundary of $F_1$ is covered by six pieces.
But $F_1\cap F_2\subseteq F_1\cap F_3$ contradicts the fact that $F_1$ intersects exactly six $2$–cells.

\medskip
\noindent\emph{Condition (3).} Any nonempty intersection of $n$ $2$–cells in $X$ corresponds to an $n$–simplex in $Y$.
Thus if four $2$–cells from $E$ have nonempty intersection, they span a $4$–simplex in $Y$.
But then some pair of vertices of this simplex are adjacent in $Y$ but not in $E_Y$, contradicting the fact that $E_Y$ is isometrically embedded.

Consequently all conditions of Proposition \ref{pr:flat_planecs} hold, and $E$ is a flat plane. 
\end{proof}

\begin{proof}[Proof of Theorem \ref{flattorus6}]
Let $X$ be a \css complex, and $Y$ be a nerve complex of $X$.
Note that automorphism of a \css complex induces a symplicial automorphism of $Y$. 
Let $G$ be a virtually non-cyclic free abelian group and $H$ be its non-cyclic free abelian subgroup of finite index.
By Theorem \ref{thm:els}, it is clear that $H\cong \mathbb{Z}^2$ and there is a $H$-invariant flat $E_Y$ in $Y$.
By Lemma \ref{virtflatsys} $\mathrm{Thick}(E_Y)$ is $G$-invariant, and as a consequence of Lemma \ref{lm:equivflatdiffsys} $\mathrm{Thick}(E_Y)=E_Y$.
By Lemma \ref{flatequiv} it corresponds to a flat $E$ in $X$. It is clear that $E$ is also $G$-invariant.
\end{proof}

\section{The case of \cftfs complexes}\label{cftf}
\subsection{Quadric complexes and quadrization}\label{quad}
In this section we define quadric complexes and the quadrization of a complex. Quadric complexes were introduced by Nima Hoda in \cite{hoda2019quadric} as square analogues of systolic complexes. Much like the relationship between \css complexes and systolic complexes, quadric complexes arise naturally as duals, via the \emph{quadrization}, of \cftfs complexes.

\begin{df}\cite[Definition 1.1.]{hoda2019quadric}
A \textit{locally quadric complex} is a square complex $Y$ satisfying the following conditions.
\begin{enumerate}[(A)]
\item The attaching map of every square is an immersion.
\item If there are two squares $F_1,F_2\in Y$ that share at least three edges, then $\partial F_1=\partial F_2$.
\item If there are two squares $F_1,F_2\in Y$ such that $\partial(F_1\cup F_2)$ is a cycle of length $4$, then there is $F\in Y$ such that $\partial F=\partial(F_1\cup F_2)$.
\item If there are three squares $F_1,F_2,F_3\in Y$ such that $\partial(F_1\cup F_2\cup F_3)$ is a cycle of length $6$, then there exist $F,F'\in Y$ such that $\partial(F_1\cup F_2\cup F_3)=\partial(F\cup F')$, i.e.~this cycle has a diagonal that divides it into two $4$-cycles.
\end{enumerate}
A \emph{quadric complex} is a simply connected locally quadric complex.
\end{df}

\begin{figure}[h]
\begin{center}
\includegraphics[scale=2.2]{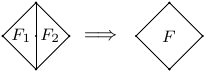}
\end{center}
\begin{center}
(C)
\end{center}
\begin{center}
\includegraphics[scale=2.2]{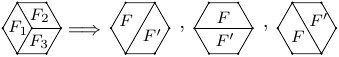}
\end{center}
\begin{center}
(D)
\end{center}
\caption{Condition (C) and (D) of Definition 7.1}
\end{figure}

For the remainder of this section, we will assume that every $1$-cell of a $2$-complex $X$ is contained in the boundary of at least one $2$-cell.
This is not a serious restriction, since each $1$-cell of $X$ not appearing on the boundary of a $2$-cell can be subdivided (if it is not embedded) and then thickened to a $2$-cell to obtain a new $2$-complex $X'$ which deformation retracts to $X$.  
The original complex $X$ embeds $\mathrm{Aut}(X)$-equivariantly into $X'$ via a continuous map $X \to X'$, which thus faithfully preserves group actions. Furthermore, if $X$ is \cftfs then so is $X'$.

Let $X$ be a $2$-complex with embedded cells and $X_0$ and $X_2$ be sets of its $0$- and $2$-cells.
The \emph{quadrization} of $X$ is the bipartite graph whose vertex set is $X_0\cup X_2$, with an edge joining $v\in X_0$ to $F\in X_2$ whenever $v\in \partial F$.

In \cite[Lemma 3.9 ]{hoda2019quadric} Hoda proved that the quadrization of simply-connected \cftfs complex is a quadric complex. 

As in the systolic case, a \emph{flat} in a quadric complex $X$ is an isometric embedding of the $1$-skeleton of $\mathbb{E}^2_{\square}$ into $X$, where $\mathbb{E}^2_{\square}$ denotes the standard tiling of the Euclidean plane by squares. Recently Hoda and Munro proved the following:

\begin{theorem}[Quadric Flat Torus Theorem]\cite[Theorem I]{flatquadric}\label{quadflat}
    Let $G$ be a non-cyclic free abelian group.  Let $G$ act metrically properly on a quadric complex $X$ (e.g. let $G$ act freely on a locally finite quadric complex $X$).  Then $G \cong \mathbb{Z}^2$ and there is a $G$-invariant flat in $X$.
\end{theorem}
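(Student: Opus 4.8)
The plan is to mirror the strategy of the combinatorial Flat Torus Theorems in the systolic (Theorem \ref{thm:els}) and cubical (\cite{wisewoodhouse}) settings, replacing the triangle-- or cube--Gauss--Bonnet combinatorics by the square combinatorics encoded in Conditions (A)--(D) of the definition of a quadric complex, together with the convexity and Helly-type properties of quadric complexes established by Hoda in \cite{hoda2019quadric}. By analogy with the systolic flat (the definition preceding Theorem \ref{thm:els}), the object to be produced is an isometric embedding of the $1$-skeleton of the square tiling $\mathbb{E}^2_{\square}$, measured in the graph metric; crucially, since a quadric complex need not be CAT(0), this flat must be built by genuinely combinatorial means rather than imported from a CAT(0) metric.

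First I would pin down the rank. Because $X$ is a two--dimensional square complex, any free abelian subgroup acting metrically properly must have rank at most $2$: a rank-$n$ subgroup yields, via its orbit map, a coarse embedding of $\mathbb{Z}^n$, and an $n$-dimensional flat cannot live inside a $2$-complex. In the locally finite case this is the asymptotic-dimension estimate $n=\mathrm{asdim}(\mathbb{Z}^n)\le\mathrm{asdim}(X)\le 2$, while in the free cocompact case one instead uses that $X$ is contractible, so $X/G$ is an aspherical $2$-complex and $\mathrm{cd}(G)\le 2$. Non-cyclicity gives $n\ge 2$, so $G\cong\mathbb{Z}^2$.

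Next, writing $G=\langle a,b\rangle$, I would manufacture the flat from the two commuting generators. For an infinite-order isometry $g$ one works with the combinatorial translation length $\ell(g)=\min_{v}d(v,gv)$ in the $1$-skeleton; metric properness forces this minimum to be attained, and the associated minimal set carries a $g$-invariant bi-infinite combinatorial geodesic, its \emph{axis}. Using that intervals and minimal galleries are well behaved in quadric complexes, one shows the minimal set is convex and splits as a product of a line with a bounded width factor. Since $a$ and $b$ commute, $b$ preserves the minimal set of $a$ and carries its axis to a parallel axis; the union of these parallel axes is the candidate flat. To verify flatness I would fill, for each $m,n$, the nullhomotopic word $a^m b^n a^{-m} b^{-n}$ by a minimal-area disc diagram $D_{m,n}\to X$, and run a combinatorial Gauss--Bonnet argument: Conditions (A)--(D) bound the curvature contributed by interior vertices and boundary cells, and minimality forces every interior vertex to have vanishing combinatorial curvature. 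This identifies $D_{m,n}$ with a flat grid of squares; passing to the direct limit as $m,n\to\infty$ produces an embedded copy of $\mathbb{E}^2_{\square}$ that is $G$-invariant by construction, with each interior vertex of degree exactly $4$ and link a genuine $4$-cycle.

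The hard part will be the final verification that this embedded grid is \emph{isometrically}, not merely quasi-isometrically, embedded, i.e.\ that the ambient quadric complex admits no shortcuts between its vertices. This is exactly where the nonpositive-curvature features of quadric complexes must be deployed in full: one needs convexity of the flat, which I would establish by pushing any geodesic with endpoints on the grid into the grid via disc-diagram surgery and the quadric replacement moves (C) and (D), while simultaneously excluding positively curved interior vertices in the minimal diagrams. Establishing this convexity and curvature control—together with the bookkeeping that glues the diagrams $D_{m,n}$ coherently across all $m,n$ so that the limit is a single $G$-invariant subcomplex—is the technical heart of the argument and the step most sensitive to the precise quadric axioms.
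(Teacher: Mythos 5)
First, a point of comparison: this paper does not prove the statement at all --- it is Theorem~I of Hoda--Munro \cite{flatquadric}, quoted verbatim and used as a black box in the proof of Theorem~\ref{flattorus44}. So there is no internal proof to measure your argument against; your proposal has to stand as a self-contained proof of the Hoda--Munro theorem, and as such it has genuine gaps rather than being a complete alternative route.

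The gaps are these. (i) Your rank bound is broken as stated: asymptotic dimension is \emph{not} bounded by topological dimension, so ``$\mathrm{asdim}(X)\le 2$ because $X$ is a $2$-complex'' is false in general (already locally finite graphs, i.e.\ $1$-complexes, can have arbitrarily large or infinite asymptotic dimension --- take a Cayley graph of a group with large $\mathrm{asdim}$); the cohomological-dimension route needs contractibility of quadric complexes, which you assert without proof or citation. (ii) The entire axis/minimal-set machinery --- that each $g\in G$ admits a $g$-invariant combinatorial geodesic axis, and that its minimal set is convex and splits as (line)$\times$(bounded) --- is imported by analogy from the CAT(0) and systolic settings, but nothing in Conditions (A)--(D) or in \cite{hoda2019quadric} is invoked to establish it for quadric complexes; note the theorem assumes only a metrically proper action, with no semisimplicity or cocompactness, so even the existence of well-behaved axes is exactly the kind of statement that requires proof (in CAT(0) cube complexes this is Haglund's combinatorial semisimplicity theorem, a substantial result). (iii) Granting axes, the step from minimal fillings $D_{m,n}$ of the commutator loops to a single $G$-invariant embedded copy of $\mathbb{E}^2_{\square}$ is asserted, not argued: Gauss--Bonnet controls a curvature \emph{sum} in which boundary and corner terms can absorb positive curvature, the diagrams for different $(m,n)$ need not be compatible or have embedded images, and the limit need not be a subcomplex. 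Finally, you explicitly defer the verification that the resulting grid has no shortcuts in $X$ --- but that isometric embedding is precisely what distinguishes a flat from a quasi-isometrically embedded plane, i.e.\ it is the actual content of the theorem, not a final technicality. In sum, the proposal is a plausible outline-by-analogy, but each of its three load-bearing steps is either incorrect as stated or left unproven.
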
 

Thus it is natural to expect an analogous result for \cftfs complexes.
Before moving to this topic, let us state few additional results of Hoda and Munro, that will be relevant for this paper.
First, observe that a boundary of a $2\times 2$ square grid in a quadric complex can be filled in multiple ways by $2\times 2$ square grids (see Figure \ref{fig:squaregrid}).

\begin{figure}[H]
\begin{center}
\includegraphics[scale=1]{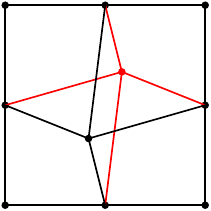}
\end{center}
\caption{Two $2\times 2$ square grids filling the same $8$-cycle in quadric complex.}\label{fig:squaregrid}
\end{figure}

As a consequence flats in quadric complexes may differ by change of one square $2\times 2$ grid to another.

\begin{df}\label{df:equivflats}
    Let $F:\mathbb{E}^2_{\square}\to X$ be a flat in $X$, and let $v$ be a vertex of $\mathbb{E}^2_{\square}$. Suppose a vertex $w$ of $X$ is adjacent to the images of the neighbors of $v$. Define $F':\mathbb{E}^2_{\square}\to X$ by $F'(v)=w$ and $F'(x)=F(x)$ for all $x\neq v$. Then $F'$ is a flat, and we say $F'$ differs from $F$ by a \emph{vertex move} on $v$. Two flats $F$, $F'$ are \emph{equivalent} if there exists a sequence of flats $F = F_0, F_1, F_2, \ldots$ such that $F_{i+1}$ differs from $F_i$ by a vertex move, for each $i$, and $F_i \to F'$ pointwise as $i\to \infty$.
\end{df}

\begin{lm}\label{equivalentFlats}
    Suppose that the images $A$, $A'$ of two flats $F$, $F'$ in a quadric complex are at finite Hausdorff distance $h:= d_{\mathrm{Haus}}(A,A')<\infty$. Then $F$ and $F'$ are equivalent. In particular, $d_{\mathrm{Haus}}(A, A')\leq 1$. 
\end{lm}

This allows us to define so called thick flats, which are subcomplexes consisting of all equivalent flats.

\begin{df}\label{thickflat}
    Let $F \colon \mathbb{E}^2_{\square} \to X$ be a flat in a quadric complex. The \emph{thick flat} $\mathrm{Thick}(F)$, is the full subcomplex spanned on all flats at finite Hausdorff distance from $F$.
\end{df}

As a corrolary of their results, Hoda and Munro further obtained the following version of the Flat Torus Theorem applicable to virtually free abelian groups and thick flats.

\begin{thm}\label{virtflat}
    Suppose a virtually $\mathbb{Z}^2$ group $G$ acts properly on a quadric complex $X$.  Then $G$ stabilizes $\mathrm{Thick}(F)$ for a $\mathbb{Z}^2$-invariant flat $F$ in $X$.
\end{thm}

\subsection{Flats in \cftfs complexes}\label{flatcftf}

Analogously to the case of \css complexes we can define flat plane for \cftfs complex.
\begin{df}
A \emph{flat plane} $\mathfrak{F}_{\square}$ is a \cftfs complex homeomorphic to the Euclidean plane such that each closed $2$-cell intersects exactly eight neighboring closed $2$-cells, four by single piece and four by a vertex. Moreover each vertex of valence at least three has valence exactly four. 
For a flat plane $\mathfrak{F}_{\square}$ there is an associated flat plane $\mathfrak{F}'_{\square}$ obtained by ignoring $0$-cells of $\mathfrak{F}_{\square}$ with valence two, so $\mathfrak{F}'_{\square}$ is a regular square tiling of the Euclidean plane. 
\end{df}

Unfortunately, unlike in the case of the nerve of a \css complex, a flat in the quadrization does not necessarily correspond to a flat plane in the original \cftfs complex.
In particular, consider the example of a \cftfs complex and its quadrization shown in Figure \ref{fig:quad}.

\begin{figure}[h]
\begin{center}
\includegraphics[scale=1.5]{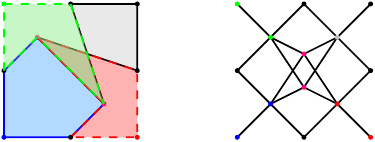}
\end{center}
\caption{}\label{fig:quad}
\end{figure}

It is evident that its quadrization can be derived from a quadrization of four squares by expanding the central vertex into an edge. 

By the action on this complex by free abelian group of rank two we can obtain the following simply connected complex (Figure \ref{fig:plane}).

\begin{figure}[H]
\begin{center}
\includegraphics[scale=1]{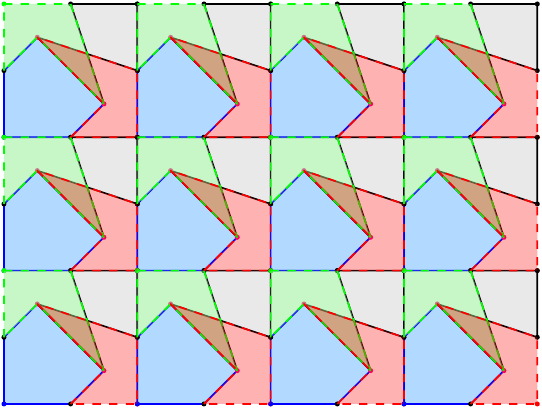}
\end{center}
\caption{}\label{fig:plane}
\end{figure}

We obtain the following proposition:

\begin{pr}
There exists a simply-connected \cftfs small cancellation complex $X$ admitting a proper and cocompact action of a non-cyclic free abelian group $G$ such that $X$ contains no $G$-invariant flat plane.
\end{pr}

Note that as a consequence of CAT(0) Flat Torus Theorem (Theorem \ref{prop:FPT}) we obtain the following corollary:

\begin{cor}
There exists a simply connected \cftfs small cancellation complex $X$ for which no $\mathrm{CAT}(0)$ metric is invariant under the action of $\mathrm{Aut}(X)$. 
\end{cor}

This demonstrates that, unlike the case of \cttss complexes, the question of whether \cftfs groups are CAT(0) cannot be solved by finding a CAT(0) metric on \cftfs complexes.

Nevertheless, the complex $X$ from the example is clearly quasi-isometric to a flat, and, in particular, it can be obtained from a flat by some basic operations.

\begin{df}\label{quasi-flat}
Take set of lines $\mathbb{R}\times\mathbb{Z}\times\{0\}$, $\mathbb{Z}\times\mathbb{R}\times \{1\},  \mathbb{Z}\times \mathbb{Z}\times [0,1]$ as a $1$-skeleton of complex $\mathfrak{X}'$, with vertices at every intersection, see Figure \ref{fig:quasiflat}. 
Assume that every $8$-cycle of the form $(i,j,0),(i+1,j,0),(i+1,j,1),(i+1,j+1,1),(i+1,j+1,0),(i,j+1,0),(i,j+1,1),(i,j,1)$ for $i,j\in \mathbb{Z}$ is filled by a $2$-cell.
Let $\mathfrak{X}$ be obtained from $\mathfrak{X}'$ by collapsing some set of edges of the form $(i,j,0),(i,j,1)$ to vertices.

A thickened-flat plane $\mathfrak{F}_{\octagon}$ is a \cftfs complex homeomorphic to $\mathfrak{X}$.
\end{df}

\begin{figure}[h]
\begin{center}
\includegraphics[scale=1]{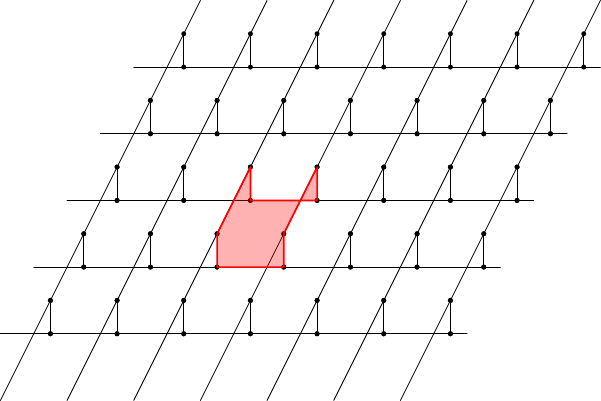}
\end{center}
\caption{Complex $\mathfrak{X}'$ from Definition \ref{quasi-flat} with single $2$-cell marked.}\label{fig:quasiflat}
\end{figure}

Observe that, in particular, the square flat plane is a thickened-flat plane with all vertical edges collapsed. The example of \cftfs complex that is not $\mathrm{CAT}(0)$ given above is also a thickened-flat plane, which can, for instance, be obtained by collapsing all edges except those of the form $(2i,2j,0),(2i,2j,1)$.
In the proof of the following proposition we will again use numberings of the $2$-cells of a complex.

\begin{pr}\label{qf_conditions}
Let $X$ be a \cftfs complex such that each edge is contained in at least two $2$-cells. 
If for each $2$-cell $F\in X$ the following conditions are satisfied: 
\begin{enumerate}
\item $F$ has non-empty intersection with exactly eight other $2$-cells $F_1,\ldots F_8$ (ordered clockwise).
\item $F\cap(F_1\cup F_3\cup F_5\cup F_7)=\partial F$.
\item $F\cap F_2= F_1\cap F_3$, $F\cap F_4= F_3\cap F_5$, $F\cap F_6= F_5\cap F_7$ and $F\cap F_8= F_7\cap F_1$.
\item $F\cap F_i\cap F_j \cap F_k\cap F_l=\emptyset$ for any distinct $i,j,k,l$.
\item For each vertex $v\in \partial F$ of degree $\delta(v)\geq 3$ one of the following holds:
\begin{itemize}
\item $\delta(v)=4$, $\{v\}=F\cap F_i$ for some $i\in \{2,4,6,8\}$ and link of $v$ is an embedded cycle of length $4$ corresponding to cells $F_{i-1},F_{i},F_{i+1}$ and $F$;
\item $\delta(v)=3$, there exists $v'$ such that $\delta(v')=3$, and $(v,v')=F\cap F_i$ for some $i\in \{2,4,6,8\}$. Links of $v$ and $v'$ consists of two embedded cycles of length $2$, cycles in one of them correspond to $F,F_{i-1}$ and $F_{i},F_{i+1}$ and in the other to $F,F_{i+1}$ and $F_{i},F_{i-1}$.
\end{itemize}
\end{enumerate}
then $X$ is a thickened-flat plane.
\end{pr}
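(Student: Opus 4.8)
Proposition \ref{qf_conditions} gives a list of local combinatorial conditions (1)--(5) on a \cftfs complex $X$ and asserts they force $X$ to be a quasi-flat plane, i.e. homeomorphic to some complex $\mathfrak{X}$ obtained from $\mathfrak{X}'$ (Definition \ref{quasi-flat}) by collapsing a set of vertical edges. This is the \cftfs analogue of Proposition \ref{pr:flat_planecs}, and the natural plan is to mimic that proof: build a combinatorial isomorphism with the model complex by transporting a numbering of the $2$-cells.

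**Sketching the key issue.** Let me think about what "quasi-flat plane" really encodes. The model $\mathfrak{X}'$ is the boundary complex of a square tiling where each lattice square has been "thickened" into an octagonal $2$-cell, four of whose sides are genuine pieces (the horizontal edges $(i,j,0)(i+1,j,0)$ etc.) and four of which are the vertical connecting edges $(i,j,0)(i,j,1)$; collapsing a chosen subset of these vertical edges produces $\mathfrak{X}$. Conditions (2)--(3) say precisely that each $F$ meets $F_1,F_3,F_5,F_7$ along four pieces covering $\partial F$, and meets $F_2,F_4,F_6,F_8$ exactly in the vertices shared by consecutive pieces. Condition (5) records the two possible local pictures at a branch vertex: a degree-$4$ vertex (a collapsed vertical edge) or a degree-$3$ vertex paired with a partner $v'$ across an uncollapsed vertical edge $F\cap F_i$. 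So the two cases of (5) exactly encode "this vertical edge was collapsed" versus "it was not."

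**The plan, following Proposition \ref{pr:flat_planecs}.** First I would verify that the model $\mathfrak{X}$ itself satisfies (1)--(5), so the conditions are genuinely characterizing. Next, choosing a base $2$-cell $C_0$, I would define a numbering $\phi:\mathbb{N}\to X_2$ by the same inductive recipe as in the earlier proposition: at each stage pick the smallest unsaturated $A_i$ (the set of six-or-eight cells meeting $C_i$) and the smallest partially-filled $A_{i,j}$, and add the missing cell. The combinatorial content I need is that the conditions pin down $|A_i|=8$ and that the "piece-adjacency" graph on the four piece-neighbors $F_1,F_3,F_5,F_7$ together with the corner cells $F_2,F_4,F_6,F_8$ has exactly the local structure of a square lattice vertex-figure; then, as before, the numbering is uniquely determined by the choice of the first few cells together with an orientation. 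I would then run the identical numbering on the model $\mathfrak{X}$ (or rather on its canonical square-lattice skeleton $\mathfrak{X}'$, where the $2$-cells are indexed by lattice points) and argue that for every index set $I$, the intersection $\bigcap_{i\in I}\phi(i)$ is nonempty iff the corresponding intersection in the model is nonempty, by the now-standard observation that Conditions (1)--(4) make all incidences among $2$-cells determined by the numbering. Finally, the cell-by-cell homeomorphisms on matching $2$-cells, glued so they agree on intersections, give a homeomorphism $X\to\mathfrak{X}$. The choice of \emph{which} vertical edges are collapsed in $\mathfrak{X}$ is dictated by Condition (5): precisely those corners where the first bullet of (5) holds.

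**Where the difficulty lies.** The genuinely delicate step is the bookkeeping of Condition (5) during the inductive matching, because unlike the purely hexagonal case of Proposition \ref{pr:flat_planecs}, here the branch vertices come in two inequivalent local types and the induced homeomorphism must respect which type occurs at each corner $F\cap F_i$, $i\in\{2,4,6,8\}$. Concretely, I must check that the link-condition in (5) is consistent around a single $2$-cell and between adjacent $2$-cells --- i.e. that a vertical edge seen as collapsed from one side is seen the same way from the other --- so that the collapsing pattern is globally well-defined and the resulting $\mathfrak{X}$ is a legitimate instance of Definition \ref{quasi-flat}. Establishing this coherence, together with confirming that the eight-neighbor/corner structure of (1)--(3) really forces the planar square-lattice combinatorics (so that $X$ is homeomorphic to a plane and not, say, a branched or periodic quotient), is the main obstacle; once that is in place the rest of the argument is a routine transcription of the numbering argument already carried out in the \css setting.
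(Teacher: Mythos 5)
Your plan follows the paper's proof almost exactly: verify that the model $\mathfrak{X}'$ of Definition \ref{quasi-flat} satisfies (1)--(5), propagate a numbering of the $2$-cells outward from a base cell, run the analogous numbering on the model, check that the two numberings have the same intersection pattern, and glue cell-by-cell homeomorphisms. So the approach is the right one. However, as written the proposal defers exactly the steps that carry the mathematical weight, and one of them is resolved in the paper by a device you do not supply.

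First, the counting. In the octagonal setting the numbering cannot be run on all eight neighbours indiscriminately: the paper first isolates the subset $A'_i\subset A_i$ of the four piece-neighbours (those $C$ with $C\cap C_i$ not contained in the intersection of two other neighbours) and proves $|A'_i|=4$ and $|A'_0\setminus A_1|=2$ using the $C(4)$ condition --- if one of $F_1,F_3,F_5,F_7$ had its intersection with $F$ absorbed by the others, $\partial F$ would be covered by three pieces. You gesture at the claim that ``the piece-adjacency graph has the local structure of a square lattice vertex-figure'' but give no argument, and this is precisely where $C(4)$ enters. Second, the coherence of the collapse pattern, which you correctly single out as the main obstacle, is dispatched in the paper by a formulation that makes it a non-issue: the edges of $\mathfrak{X}'$ to be collapsed are declared to be exactly those corresponding to pairs $\phi(i),\phi(j)$ whose intersection in $X$ is a single vertex. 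That criterion is symmetric in the two cells, so there is nothing to check about agreement ``from both sides''; one only needs Conditions (2)--(4) to see that any such single-vertex intersection sits at a corner position (even index) and hence corresponds to a vertical edge of $\mathfrak{X}'$, and Condition (5) to see that the local links after collapsing match. Since you leave this obstacle explicitly open and omit the counting, the proposal is a correct outline of the paper's argument rather than a complete proof.
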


\begin{proof}
Consider the complex $\mathfrak{X}'$ from Definition \ref{quasi-flat}.
First, note that $\mathfrak{X}'$ satisfies conditions (1)–(5). Indeed, any $2$-cell $F\in \mathfrak{X}'$ intersects exactly eight other $2$-cells. We enumerate these clockwise, choosing $F_1$ to be a cell whose intersection with $F$ is not merely an edge of the form $(i,j,0),(i,j,1)$. Then, for all even $n$, $F\cap F_n$ is exactly such an edge. Conditions (1)–(4) are then immediate.

Moreover, each vertex $v$ in $\mathfrak{X}'$ has degree $\delta(v)=3$, and its link consists of two embedded cycles, so Condition (5) is also satisfied.

We consider the following numbering of the complex $X$. Let $C_0$ be a $2$-cell of $X$. 
Set $\phi(0)=C_0$, and define 
$$A_0=\{C \in X_2\setminus C_0\ : C\cap C_0 \neq \emptyset\}.$$ 
By Condition (1), $|A_0|=8$.
Next, consider the set 
$$A'_0:=\{C \in A_0\ : \forall C',C''\in (A_0\setminus C),\ C\cap C_0 \not\subset C'\cap C'' \}.$$ 
Note that $|A'_0|= 4$.
Indeed, Condition (3) implies that the intersections of the cells $F_1,F_3,F_5,F_7$ from $A_0$ cover intersections of $F_2,F_4,F_6,F_8$ with $F$ hence $|A'_0|\leq 4$.
Moreover, Condition (3) implies that 
$$F\cap (F_i\cup F_j\cup F_k)\supset F\cap (F_2\cup F_4\cup F_6\cup F_8)$$
for any distinct $i,j,k\in \{1,3,5,7\}$. 
If any of $F_1,F_3,F_5,F_7$ has its intersection with $F$ covered by some other $F_i,F_j$, then $F_i\cup F_j$ is covered by remaining three among $F_1,F_3,F_5,F_7$. 
Consequently, the boundary of $F$ can be covered by three pieces, a contradiction with \cftf.

Choose any $C_1 \in A'_0$, set $\phi(1)=C_1$, and define 
$$A_1=\{C \in X_2\setminus C_1\ : C\cap C_1 \neq \emptyset\}.$$
Since $A_0'$ contains exactly four $2$-cells, and one of them is $C_1$, the intersection $A_0'\setminus A_1$ has at most three $2$-cells. Condition (2) implies that it has at least two cells, and \cftfs implies that it cannot have three; otherwise one of $F_1,F_3,F_5,F_7$ would have intersection with remaining three ones, making it is possible to cover boundary of $C_0$ using only three pieces. Thus $|A_0'\setminus A_1|=2$.
Denote these two cells $C_2,C_3$ and set $\phi(2)=C_2$, $\phi(3)=C_3$.

Inductively, assume that $\phi$ has been defined on a set of cardinality $k+1$, so $\phi(0)=C_0, \ldots, \phi(k)=C_k$.
For each $0\leq i\leq k$ define 
$$A_i=\{C \in X_2\setminus C_i\ : C\cap C_i \neq \emptyset\}$$
and
$$A'_i:=\{C \in A_i\ : \forall C',C''\in (A_i\setminus C),\ C\cap C_i \not\subset C'\cap C'' \}.$$ 
Note that for every $i$ we have $|A_i|=8$ and $|A'_i|=4$.
Let $i$ be the smallest index such that $A'_i\cap \{C_0,\ldots, C_k\}\neq A'_i$.
Then let $j>i$ be the smallest index such that $(A'_i\cap A_{j})\cap \{C_0,\ldots, C_k\}\neq (A'_i\cap A_{j})$. 
Note that $k>1$ implies that $|(A'_i\cap A_{j})\cap \{C_0,\ldots, C_k\}|\geq 1$. Indeed, for each $k>1$ the cell $C_k$ is chosen from $(A'_i\cap A_{j})$, where $i<j\leq k$, thus $C_j\in (A'_i\cap A_{k})$.
We denote by $C_{k+1}$ the remaining cell from $(A'_i\cap A_{j})$ and set $\phi(k+1)=C_{k+1}$. 
Note that this also implies that such numbering is uniquely determined by the choice of the first three cells.

We define numbering $\psi$ of the cells of $\mathfrak{X}'$ in the same way. It is clear that $i,j\in\mathbb{N}$, $\psi(i)\cap\psi(j)\neq\emptyset$ if and only if $\phi(i)\cap\phi(j)\neq\emptyset$ (see Figure \ref{fig:enumeration}).

\begin{figure}[h]
\begin{center}
\includegraphics[scale=0.9]{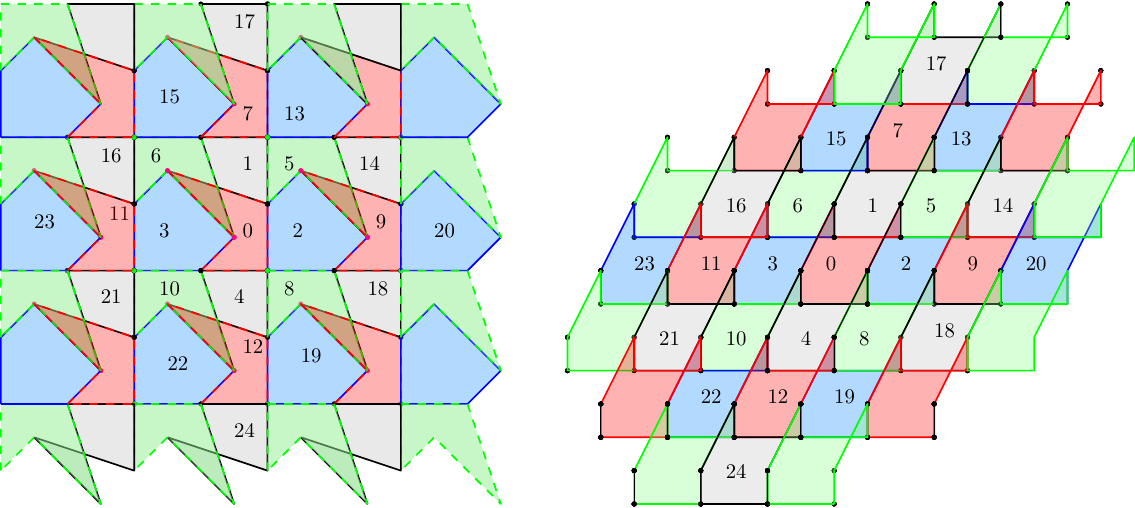}
\end{center}
\caption{Example of numberings $\phi$ and $\psi$ of $X$ and $\mathfrak{X}'$}\label{fig:enumeration}
\end{figure}

Assume that for some $i,j$ the intersection of $\phi(i),\phi(j)$ in $X$ is a single vertex. Take $\phi(i)$ as $F\in X$. Since $X$ satisfies Condition (2), $\phi(j)$ must be one of $F_2,F_4,F_6,F_8$. 
Note that if we take $\psi(i)$ as $\mathfrak{F}\in\mathfrak{X}'$ then $\psi(j)$ is one of corresponding $\mathfrak{F}_2,\mathfrak{F}_4,\mathfrak{F}_6,\mathfrak{F}_8$. Indeed, since $\phi(i)\cap\phi(j)$ is a vertex, Conditions (3) and (4) imply that $\phi(j)=F_n$ has no intersection with $\phi(k):=F_{n\pm 2}$.
But $\mathfrak{F}_n$ for odd $n$ have such intersection, which contradicts the fact that $\psi(i)\cap\psi(j)\neq \emptyset$ iff $\phi(i)\cap\phi(j)\neq \emptyset$.
Thus intersections in $\mathfrak{X}'$, corresponding to any intersection $\phi(i)\cap\phi(j)$ that consists of single vertex, are of the form $(n(i),m(j),0),(n(i),m(j),1)$.

Consequently we can consider complex $\mathfrak{X}$ from Definition \ref{quasi-flat} by collapsing each edge in $\mathfrak{X}'$ for which intersection of $\phi(i),\phi(j)$ in $X$ is a single vertex.
It remains a thickened-flat plane, inherits the numbering $\psi$, and continues to satisfy Conditions (1)–(4). Collapsing edges transforms pairs of degree $3$ vertices into a single degree $4$ vertex, satisfying Condition (5).

All vertices of $X$ are internal.
Indeed, by Condition (5) each vertex of degree greater than $2$ is internal. 
If $v\in \partial F$ has degree $2$, then either it lies in a single $F\cap F_i$, or in two consecutive cells $F_i$ and $F_{i+1}$. Condition (3) ensures it also lies in either $F_{i-1}$ or $F_{i+2}$.
If it is a vertex in the middle of the intersection of the central of those cells with $F$ and is not internal then it has degree greater than $2$, a contradiction.

For any $2$-cell of $X$ there is a natural homeomorphism to the $2$-cell of $\mathfrak{X}$ with the same number.
Conditions (3) and (4) ensure that for any $I\subset \mathbb{N}$ we have $\bigcap\limits_{i\in I} \psi(i)\neq \emptyset$ iff $\bigcap\limits_{i\in I} \phi(i)\neq \emptyset$.
Consequently we can define bijection between $X$ and $\mathfrak{X}$ by taking homeomorphisms between $2$-cells with the same numbers, in a way that agrees on their intersections.
Condition (5) ensure that this map induced a bijection between sets of vertices of $X$ and $\mathfrak{X}$ of degree at least $3$. 
The resulting map is clearly continuous, therefore $X$ is a thickened-flat plane.
\end{proof}

\begin{df}
A thickened-flat plane $\mathfrak{F}_{\octagon}$ is a \emph{thickened-flat} in a \cftfs complex $X$ if it is a subcomplex and, for any pair of $2$-cells  $F_1,F_2\in \mathfrak{F}_{\octagon}$, the gallery distance in $\mathfrak{F}_{\octagon}$ is equal to gallery distance in $X$.
\end{df}

\begin{lm}\label{lm:equivflatdiff}
Let $X$ be a \cftfs complex and $Y$ its quadrization.
Let $F$ be a flat in $Y$ and $F'$ be a flat differing from $F$ by a vertex move (see, Definition \ref{df:equivflats}) for vertices $v,w$ of $Y$. 
Then $v,w$ correspond to $0$-cells of $X$.
\end{lm}

\begin{proof}
It is clear that either both $v$ and $w$ correspond to two $2$-cells or two $0$-cells of $X$.
Consider two $2\times 2$ square grids filling the same $8$-cycle $\{v_1,F_1,v_2,F_2,v_3,F_3,v_4,F_4\}$ in $Y$, with centers $v$ and $w$.
If $v$ and $w$ correspond to $2$-cells $F_v, F_w$, then intersection $F_v\cap F_w\supset \{v_1,v_2,v_3,v_4\}$. By Lemma \ref{lem:intersections} intersection of two $2$-cells is connected, thus intersection of $F_v$ and $F_w$ is a piece. Each $F_i$ contains a piece between $v_i$ and $v_{i+1}$, therefore $F_v\cap (F_1\cup F_2\cup F_3\cup F_4)\subsetneq \partial F_v$.
As a consequence one of $F_i$ contains at least three of $v_i$'s (see Figure \ref{fig:cond2}), and there is an edge in $Y$ between vertices that are at distance $3$ from each other in $F$, contradicting the isometric embedding and the fact that $F$ is a flat.  
\end{proof}

As a consequence we obtain the following.

\begin{cor}\label{cor:thickflat}
Let $X$ be a \cftfs complex and $F$ be a flat in quadrization $Y$ of $X$.
Let $E$ (resp. $\mathrm{Thick(E)}$) be a subcomplex of $X$ consisting of all $2$-cells of $X$ corresponding to vertices of $F$ (resp. $\mathrm{Thick(F)}$).
Then $E=\mathrm{Thick}(E)$.
\end{cor}

\begin{lm}\label{thickthickenedequivalence}
Let $X$ be a \cftfs complex, $Y$ its quadrization and $\mathrm{Thick}(E_Y)$ be a thick flat in $Y$ spanned on flats equivalent to a flat $E_Y$.
Let $\mathrm{Thick}(E)$ be a subcomplex of $X$ consisting of all $2$-cells of $X$ corresponding to vertices of $\mathrm{Thick}(E_Y)$.
Then $\mathrm{Thick}(E)$ is a thickened-flat.
\end{lm}

\begin{proof}
By Corrolary \ref{cor:thickflat}, $\mathrm{Thick}(E)$ is equal to a subcomplex $E$ of $X$ consisting of all $2$-cells of $X$ corresponding to vertices of $E_Y$, so it is enough to show that $E$ is a thickened flat.  

Each gallery in $X$ corresponds to a path of the same length in $Y$. In particular, the gallery distance between two $2$-cells in $X$ is half the distance between the corresponding vertices in $Y$. Therefore, if $E$ is not isometrically embedded, $E_Y$ would not be either, a contradiction.
It remains to show that $E$ satisfies conditions of Proposition \ref{qf_conditions}.

From now on, we denote by $x_i$ the vertices in $Y$ corresponding to the cells $F_i$ in $X$ (indices may be omitted when convenient). We denote by $v_i$ the vertices of $Y$ corresponding to the vertices of $X$, abusing notation by using the same label $v_i$ in both sets.

First, we observe that the intersection of any five $2$-cells of $E$ is empty, which implies Condition (4). Indeed, if a tuple of pairwise intersecting $2$-cells $F_1, F_2, F_3, F_4, F_5$ had a nonempty intersection, then by the Helly property (Lemma \ref{lem:intersections}), their intersection in $E$ would contain at least one vertex. Consequently, there would exist a vertex $v$ in $Y$ adjacent to all of $x_1, \ldots, x_5$.

However, this would imply that the distance between each pair of distinct elements $(x_i, x_j)$ in $Y$ is exactly $2$, which contradicts the possibility of $\mathbb{E}^2_{\square} \hookrightarrow E_Y$ being isometrically embedded in $Y$. To see this, consider balls of radius $2$ around $x_1$ and $x_2$ in the flat $E_Y$, see Figure \ref{fig:quintintersection}. Their intersection contains exactly four vertices from $X_2$, two of which are $x_1$ and $x_2$. Therefore, one of $x_3, x_4, x_5$ would fail to be at distance $2$ from either $x_1$ or $x_2$ in $E_Y$.

\begin{figure}[h]
\begin{center}
\includegraphics[scale=0.7]{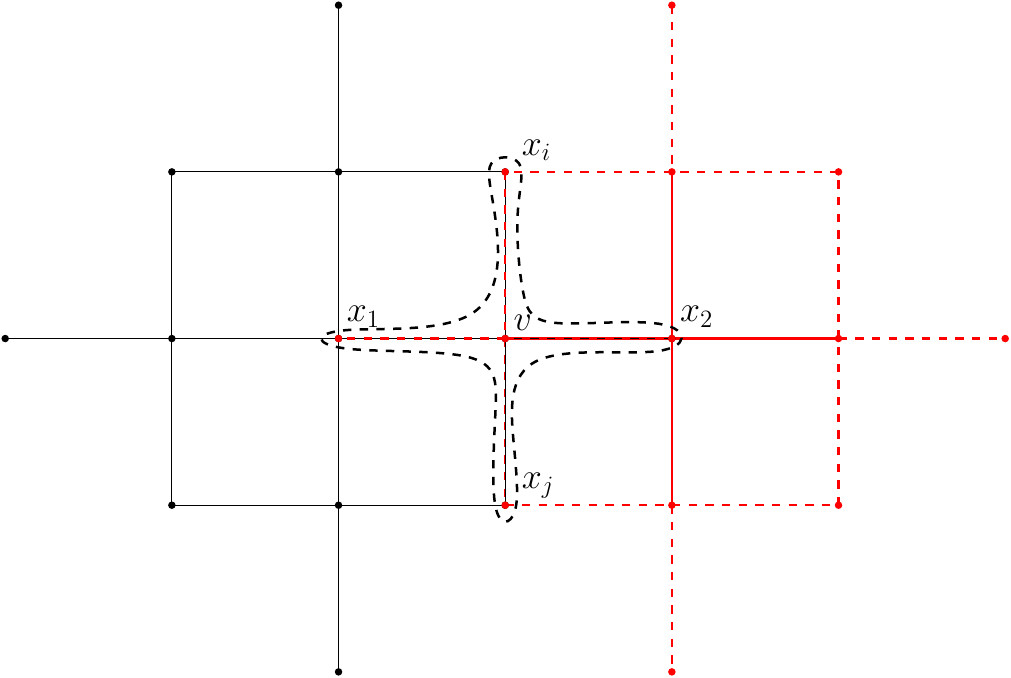}
\end{center}
\caption{Intersection of two balls of radius $2$ around two vertices at distance $2$ from each other in $E_Y$.}\label{fig:quintintersection}
\end{figure}

Now, let $F \in E$ and let $x \in E_Y$ be the corresponding vertex. Observe that exactly four vertices $v_i$ of $E$ belong to $F$. Each of these vertices is at distance $1$ from $x$ in $E$, and since there are only four vertices at distance $1$ from $x$ in the flat $E_Y$, the existence of any additional vertex would again imply that $\mathbb{E}^2_{\square} \hookrightarrow E_Y$ is not isometrically embedded in $Y$. Denote these vertices by $v_1, v_2, v_3, v_4$, ordered clockwise.

Moreover, there are exactly eight cells $F_i$ in $E$ such that $F \cap F_i \neq \emptyset$. Each $2$-cell intersecting $F$ shares at least a vertex in $X$, so $x$ and $x_i$ are at distance $2$ in $Y$. Since there are exactly eight vertices at distance $2$ from $x$ in $E_Y$, any additional cell would violate the isometric embedding of $E_Y$ in $Y$, proving Condition (1).

Denote these eight cells by $F_1, \ldots, F_8$ and their corresponding vertices in $Y$ by $x_1, \ldots, x_8$, ordered clockwise starting from $x_1$, which is adjacent to $v_4$ and $v_1$. Then $x_1, x_3, x_5, x_7$, together with $x$, span squares $[x_1, v_1, x, v_4]$, $[x_3, v_2, x, v_1]$, $[x_5, v_3, x, v_2]$, $[x_7, v_4, x, v_3]$. On the other hand, $x_2, x_4, x_6, x_8$ are each adjacent to $v_{i/2}$, see Figure \ref{fig:ballradius2}.

\begin{figure}[h]
\begin{center}
\includegraphics[scale=0.7]{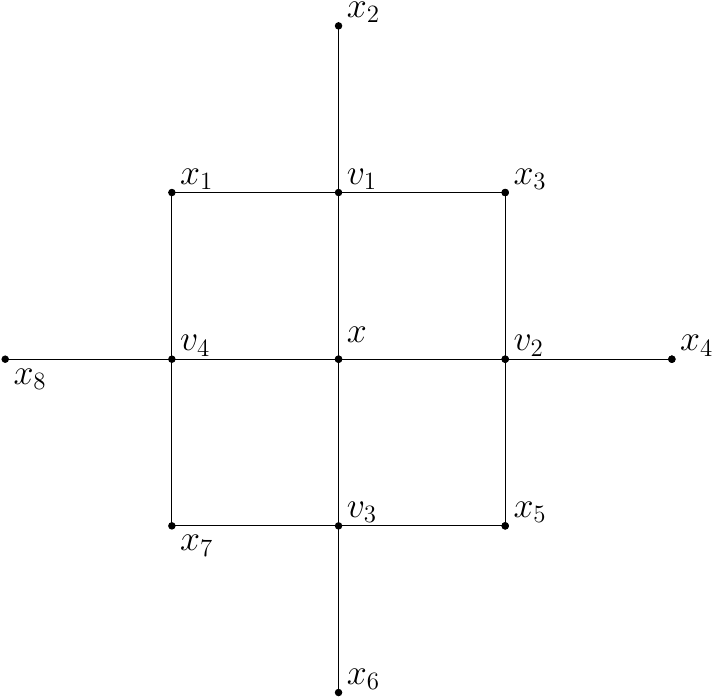}
\end{center}
\caption{Vertices $x_1\ldots x_8$ and $v_1,\ldots v_4$ around vertex $x$ in $E_Y$}\label{fig:ballradius2}
\end{figure}

Observe that $(F_1 \cup F_3 \cup F_5 \cup F_7) \cap F = \partial F$. Indeed, if the pieces $(v_1, v_2), (v_2, v_3), (v_3, v_4), (v_4, v_1)$ do not cover $\partial F$, then these pieces form a path in $F$, see Figure \ref{fig:cond2}. In this case, there exists $v_j \in (v_i, v_{i+1})$ for some $i$, implying $v_j$ is adjacent in $Y$ to the same $x_s$ as $(v_i, v_{i+1})$. But then $x_s$ and $v_j$ are at distance $3$ in $E_Y$, contradicting the isometric embedding. Thus, Condition (2) holds. Moreover it is clear that every edge in $E$ belongs to at least two $2$-cells.

\begin{figure}[h]
\begin{center}
\includegraphics[scale=0.7]{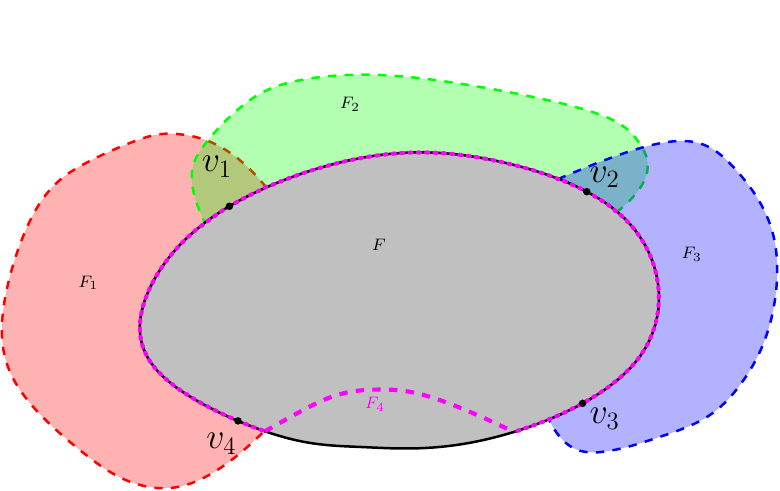}
\end{center}
\caption{ If pieces $(v_1,v_2), (v_2,v_3), (v_3,v_4),(v_4,v_1)$ do not cover boundary of $F$ then one of $F_i$ has to contain at least three of $v_i$'s.}\label{fig:cond2}
\end{figure}

Observe that $F\cap F_i\subset F_{i-1}\cap F_{i+1}$ for $i=2,4,6,8$.
This condition is obvious if $F\cap F_i=\{v_{\frac{i}{2}}\}$. If this intersection is a piece, then there exists additional vertex $v'\in Y$ spanning a square with $x$, $v_{\frac{i}{2}}$ and $x_i$. Thus there exist two six cycles spanned by three squares in $Y$, see for Figure \ref{fig:cond3}. As a consequence, there is an edge either between $v'$ and $x_{i\pm 1}$, $x_i$ and $v_{\frac{i}{2}\pm 1}$ or between $x$ and a vertex in $E_Y$ that is at distance three from it. Since $E_Y$ is isometrically embedded, only first configuration is possible, confirming Condition (3).

\begin{figure}[h]
\begin{center}
\includegraphics[scale=0.7]{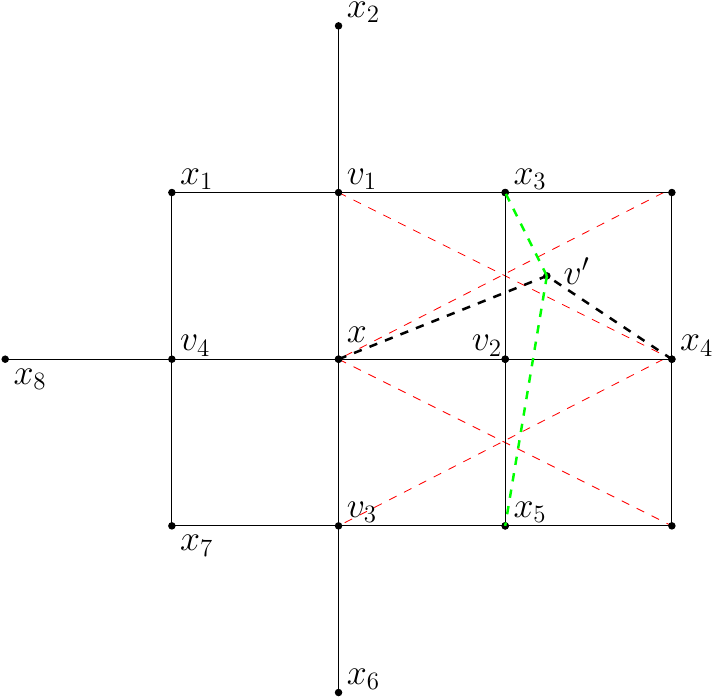}
\end{center}
\caption{Existence of square $[x, v_{2}, x_4,v']$ implies existence of two 6-cycles in $Y$. Since either of red edges contradicts the fact that $E_Y$ is isometrically embeded, both green edges belong to $Y$.}\label{fig:cond3}
\end{figure}

Finally, we verify that all vertices of $E$ satisfy Condition (5). Let $v$ be a vertex of $E$. There exists $F'_1$ such that $v \in \partial F'_1$, and since each cell is fully covered by pieces, another cell $F'_2$ such that $v \in F'_1 \cap F'_2$. If there is no $F'_3$ with $v \in F'_1 \cap F'_2 \cap F'_3$, then $v$ has degree $2$, since each edge adjacent to $v$ lies in $F'_1 \cap F'_2$, and $2$-cells of simply connected \cftfs complexes have no self-intersections.

Thus there is $F'_3\in E$ such that $v\in F'_1\cap F'_2\cap F'_3$. Observe that it implies that there is also $F'_4$ such that $v\in F'_1\cap F'_2\cap F'_3\cap F'_4$. 
Indeed, if there is no such cell, then since boundary of each cell is fully covered by pieces, each edge adjacent to $v$ belongs to one of $F'_1\cap F'_2$, $F'_1\cap F'_3$, $F'_2\cap F'_3$. Since each cell can contain at most two edges adjacent to $v$, the degree of $v$ cannot exceed $3$. If it is equal to $3$ then there is a reduced disc diagram in $X$ containing internal vertex of degree $3$, a contradiction to T($4$) condition.
Thus degree of $x$ is exactly $2$. In $E$ there are eight cells $F_1,\ldots F_8$ with non-empty intersection with $F'_1$. Among these, there is $F_i=F'_2$ and $F_j=F'_3$. It is clear that difference between $i$ and $j$ is at most $2$ (modulo $8$) if both indices are odd, or at most $1$ if either index is even, as otherwise their intersection would be empty. 
For any even $i$ we have $F'_1\cap F_i\subset F_{i-1}\cap F_{i+1}$, which ensures existence of $F'_4$.

By already proven Condition (4) the intersection of each tuple of five distinct cells is empty. Therefore, each vertex belong either to exactly two cells or exactly four cells.
It follows that $v$ cannot have degree higher than $4$. We already noted that if $v$ belongs only to two $2$-cells, then it has degree $2$.
If $v$ belongs to four $2$-cells then by the fact that each edge belongs to intersection of two $2$-cells and the fact that every cell can contain at most two edges adjacent to $v$, the degree cannot exceed $4$.

Now consider the link $X_v$ of any vertex $v\in E$. There is always an  embbeded cycle in $X_v$. Indeed, link $X_v$ consist either of four edges and at most four vertices or of two edges and two vertices. Since none of these edges is a loop, in either case there exists an embedded cycle of length at least $2$.
Condition T($4$) forbids embedded cycle of length $3$, so it has length either $2$ or $4$.

If $v$ has degree three, the embedded cycle clearly has length $2$, formed by some pair from $F'_1,F'_2,F'_3,F'_4$. The remaining pair also forms a cycle of length $2$. Indeed, the first pair contains two of three edges adjacent to $v$, and the remaining cells must contain the third one. Since the boundaries of each cell are fully covered, the two remaining cells also contain the remaining edges. They must share the same edge, as otherwise one obtains a reduced disc diagram with internal vertex of degree three, contradicting \cftf. Thus they also form an embedded cycle of length $2$ in $X_v$.

If $v$ has degree $4$ and an embedded cycle in $X_v$ is not of length $4$, there must be two embedded cycles of length $2$, as each vertex of $X_v$ belongs to two edges.
Without loss of generality assume that one cycle corresponds to $F'_1,F'_2$ and the other to $F'_3, F'_4$. In this case $F'_1$ shares at least two edges with $F'_2$ and only the vertex $v$ with both $F'_3$ and $F'_4$.
Since $F'_2,F'_3,F'_4$ are three consecutive (not necsessarily ordered) cells intersecting $F'_1$, either $F'_1\cap F'_2\subset F'_3\cap F'_4$, or one of $F'_3,F'_4$ shares at least an edge with $F'_1$. In either case a contradiction.

It follows from Proposition \ref{qf_conditions} that $E$ is a thickened-flat, so $\mathrm{Thick}(E)=E$ is a thickened flat.

\end{proof}

This allows us to prove Theorem \ref{flattorus44}.

\begin{proof}[Proof of Theorem \ref{flattorus44}]
Let $G$ be a virtually non-cyclic free abelian group and $H$ be its non-cyclic free abelian subgroup of finite index. Since $G$ acts metrically properly (with respect to gallery metric) on $X$, it acts metrically properly (with respect to standard graph metric on $1$-skeleton) on its quadrization $Y$.
By Quadric Flat Torus Theorem of Hoda and Munro (Theorem \ref{quadflat}), if a free-abelian group $H$ of rank $n\geq 2$ acts metrically properly on a quadric complex then $H\cong \mathbb{Z}^2$ and there is a $H$-invariant flat $E_Y$ in $Y$.
By Lemma \ref{thickthickenedequivalence} an $H$-invariant subcomplex $E$ corresponding to $E_Y$ in $X$ is a thickened-flat.

Moreover, by Theorem \ref{virtflat} $\mathrm{Thick}(E_Y)$ is $G$-invariant, and as a consequence of Corollary \ref{cor:thickflat} this holds for $E$ as well.

\end{proof}

\bibliographystyle{alpha}
\bibliography{references}{}
\end{document}